\newtheorem{thm}{Theorem}
\newtheorem{lem}{Lemma}
\newtheorem{defn}{Definition}
\newcommand{\expect}[1]{\mathbb{E}\left[#1\right]}
\newcommand{\defequiv}{\mbox{\raisebox{-.3ex}{$\overset{\vartriangle}{=}$}}}
\newcommand{\norm}[1]{||{#1}||}
\newcommand{\bv}[1]{{\boldsymbol{#1} }}
\newcommand{\script}[1]{{{\cal{#1} }}}
\begin{document}

\title
  {Distributed Stochastic Optimization via Correlated Scheduling}
\author{Michael J. Neely\\University of Southern California\\\url{http://www-bcf.usc.edu/~mjneely}$\vspace{-.3in}$
\thanks{The author is with the  Electrical Engineering department at the University
of Southern California, Los Angeles, CA.} 
\thanks{This work is supported in part  by one or more of:  the NSF Career grant CCF-0747525,  NSF grant 1049541,  the 
Network Science Collaborative Technology Alliance sponsored
by the U.S. Army Research Laboratory W911NF-09-2-0053.}}

\markboth{}{Neely}

\maketitle

\begin{abstract}   
This paper considers a problem where multiple users make repeated decisions based on their own observed events.  The events and decisions at each time step determine the values of a utility function and a collection of penalty functions.  The goal is to make distributed decisions over time to maximize time average utility subject to time average constraints on the penalties.  An example is a collection of power constrained sensor nodes that repeatedly report their own observations to a fusion center. 
Maximum time average utility  is fundamentally reduced because users do not know the events observed by others.  Optimality is characterized for this distributed context.  It is shown that optimality is achieved by correlating user decisions through a commonly known pseudorandom sequence.   An optimal algorithm is developed that chooses pure strategies at each time step based on a set of time-varying weights. 
\end{abstract}

\section{Introduction}

Consider a multi-user system that operates over discrete time with unit time slots $t \in \{0, 1, 2, \ldots\}$. There are $N$ users.  At each time slot $t$, each user $i$ observes a \emph{random event} $\omega_i(t)$ and makes a \emph{control action} $\alpha_i(t)$ based on this observation.  Let $\bv{\omega}(t)$ and $\bv{\alpha}(t)$ be vectors of these values: 
\begin{eqnarray*}
\bv{\omega}(t) &=& (\omega_1(t), \omega_2(t), \ldots, \omega_N(t)) \\
\bv{\alpha}(t) &=& (\alpha_1(t), \alpha_2(t), \ldots, \alpha_N(t)) 
\end{eqnarray*}
For each slot $t$, these vectors determine the values of a \emph{system utility} $u(t)$ and a collection of \emph{system penalties} $p_1(t), \ldots, p_K(t)$ (for some non-negative integer $K$) via real-valued functions: 
\begin{eqnarray*}
u(t) &=& \hat{u}(\bv{\alpha}(t), \bv{\omega}(t)) \\
p_k(t) &=& \hat{p}_k(\bv{\alpha}(t), \bv{\omega}(t)) \: \: \forall k \in \{1, \ldots, K\} 
\end{eqnarray*} 
The functions $\hat{u}(\cdot)$ and $\hat{p}_k(\cdot)$ are arbitrary and can possibly be negative.  Negative penalties can be used to represent desirable \emph{system rewards}.  

The goal is to make distributed decisions over time that maximize time average utility subject to time average constraints on the penalties.  
Central to this problem is the assumption that each user $i$ can only observe $\omega_i(t)$, and cannot observe the value of $\omega_j(t)$ for other users $j \neq i$.   Further, each user $i$ only knows its own action $\alpha_i(t)$, but does not know the actions $\alpha_j(t)$ of others.  Therefore, each user only knows a portion of the arguments that go into the functions $\hat{u}(\bv{\alpha}(t), \bv{\omega}(t))$ and $\hat{p}_k(\bv{\alpha}(t), \bv{\omega}(t))$ for each slot $t$.  This uncertainty fundamentally restricts the time averages that can be achieved.  

Specifically, assume the random event vector $\bv{\omega}(t)$ is independent and identically distributed (i.i.d.) over slots (possibly correlated over entries in each slot).  The vector $\bv{\omega}(t)$ takes values in some abstract \emph{event space} $\Omega = \Omega_1 \times \Omega_2 \times \cdots \times \Omega_N$, where $\omega_i(t) \in \Omega_i$ for all $i \in \{1, \ldots, N\}$ and all slots $t$.  Similarly, assume $\bv{\alpha}(t)$ is chosen in some abstract \emph{action space} $\script{A} = \script{A}_1 \times \script{A}_2 \times \cdots \times \script{A}_N$, where $\alpha_i(t) \in \script{A}_i$ for all $i \in \{1, \ldots, N\}$ and all slots $t$. 
Let $\overline{u}$ and $\overline{p}_k$ be the time average expected utility and penalty incurred by a particular algorithm:\footnote{For simplicity, it is temporarily assumed that the time averages exist.  A more precise formulation is specified in Section \ref{section:optimality} using $\liminf$ and $\limsup$.} 
\begin{eqnarray*}
\overline{u} &=& \lim_{t\rightarrow\infty} \frac{1}{t}\sum_{\tau=0}^{t-1} \expect{u(\tau)} \\
\overline{p}_k &=& \lim_{t\rightarrow\infty} \frac{1}{t}\sum_{\tau=0}^{t-1}\expect{p_k(\tau)} 
\end{eqnarray*} 
The following problem is considered: 
\begin{eqnarray}
\mbox{Maximize:} & \overline{u} \label{eq:p1} \\
\mbox{Subject to:} & \overline{p}_k \leq c_k \: \: \: \forall k \in \{1, \ldots, K\} \label{eq:p2} \\
& \mbox{Decisions are distributed} \label{eq:p3} 
\end{eqnarray}
where  $c_k$ are a given collection of real numbers that specify constraints on the time average penalties.

The constraint that decisions must be distributed, specified in \eqref{eq:p3}, is not mathematically precise.  This constraint is more carefully posed in Section \ref{section:optimality}.  Without the distributed scheduling constraint, the problem \eqref{eq:p1}-\eqref{eq:p2} reduces to a standard problem of stochastic network optimization and can be solved via the \emph{drift-plus-penalty method} \cite{sno-text}. Such a centralized approach would allow users to coordinate to form an action vector $\bv{\alpha}(t)$ based on full knowledge of the event vector
$\bv{\omega}(t)$.  The time average utility achieved by the best centralized algorithm can be strictly larger than that of the best distributed algorithm. 
This is shown for an example sensor network problem in Section \ref{section:sensor-network}. 

\subsection{Applications to sensor networks} 

The above formulation is useful for a variety of stochastic network optimization problems where distributed agents make their own decisions based on partial 
system knowledge.  An important example is a network of 
wireless sensor nodes that repeatedly send reports about system events to a fusion center. 
The goal is to make distributed decisions that maximize time average \emph{quality of information}.    This scenario was previously considered by Liu et al. in  
\cite{network-corroboration-tpds}.  There, sensors can provide reports every slot $t$ using one of  multiple \emph{reporting formats}, such as text, image, or video. Sensors can also 
choose to remain idle on slot $t$. Thus, the action spaces $\script{A}_i$ are the same for all sensors $i$: 
\[ \alpha_i(t) \in \script{A}_i \defequiv \{\mbox{idle}, \mbox{text}, \mbox{image}, \text{video}\} \: \: \: \forall i \in \{1, \ldots, N\}  \]
where the notation ``$\defequiv$'' represents \emph{defined to be equal to}. 
Each format requires a different amount of power and provides a different level of quality.  For example, define $p_i(t)$ as the power incurred by sensor $i$ on slot $t$, where: 
\[ p_i(t) = \left\{ \begin{array}{ll}
0 & \mbox{ if $\alpha_i(t) = \mbox{idle}$} \\
p_{text}&\mbox{ if $\alpha_i(t) = \mbox{text}$} \\
p_{image} & \mbox{ if $\alpha_i(t) = \mbox{image}$} \\
p_{video} & \mbox{ if $\alpha_i(t) = \mbox{video}$}  
\end{array}
\right. \]
where $p_{text}$, $p_{image}$, $p_{video}$ represent powers required for each of the three reporting formats and satisfy: 
\[ 0  < p_{text} < p_{image} < p_{video} \]

Assume that $\omega_i(t)$ represents the \emph{quality} that sensor $i$ would bring to the fusion center if it reports the event it observes on slot $t$ using the video format. 
Define $f(\alpha_i(t))$ as the fraction of this quality that is achieved under format $\alpha_i(t)$: 
\[ f(\alpha_i(t))=  \left\{ \begin{array}{ll}
0 & \mbox{ if $\alpha_i(t) = \mbox{idle}$} \\
f_{text} & \mbox{ if $\alpha_i(t) = \mbox{text}$} \\
f_{image} & \mbox{ if $\alpha_i(t) = \mbox{image}$} \\
1&\mbox{ if $\alpha_i(t)=\mbox{video}$} \\
\end{array}
\right. \]
where
\[ 0 < f_{text} < f_{image} < 1 \]

The prior work \cite{network-corroboration-tpds} considers the problem of maximizing time average utility subject to a time average power constraint: 
\[ \sum_{i=1}^N \overline{p}_i \leq c \]
where $c$ is some given positive number.  Further, that work restricts to the special case when the utility function is a \emph{separable sum} of functions of user $i$ variables, such as: 
\[ u(t) = \sum_{i=1}^N f_i(\alpha_i(t))\omega_i(t) \]

Such separable utilities cannot model the realistic scenario of \emph{information saturation}, where, once a certain amount of utility is achieved on slot $t$, there is little value of having additional sensors spend power to deliver additional information on that slot.  
The current paper considers the case of \emph{arbitrary}, \emph{possibly non-separable} utility functions.  An example is: 
\[ u(t) = \min\left[\sum_{i=1}^N f(\alpha_i(t))\omega_i(t), 1\right] \] 
This means that once a total quality of $1$ is accumulated from one or more sensors on slot $t$, there is no advantage in having other sensors report information on that slot. 
This scenario is significantly more challenging to solve in a distributed context.   For example, suppose the $\omega_i(t)$ variables are binary valued, representing whether or not sensor $i$ observes an event on slot $t$.  Suppose  $\omega_1(t)=\omega_2(t)=1$.  Utility is maximized if either sensor 1 or sensor 2 decides to report in the video format.  Power is wasted if they both send video reports.   However, sensor 1 does not know the value of $\omega_2(t)$, sensor 2 does not know the value of $\omega_1(t)$, and neither sensor knows what format will be selected by the other.   

\subsection{Applications to wireless multiple access} 

The general formulation of this paper can also treat simple forms of distributed multiple access problems.   Again suppose there are $N$ wireless sensors that report to a fusion center. For each $i \in \{1, \ldots, N\}$, define $\omega_i(t)$ as the \emph{quality} that a transmission from sensor $i$ would bring to the system if it transmits on slot $t$.   Define $\alpha_i(t)$ as a binary value that is 1 if sensor $i$ transmits on slot $t$, and 0 else. Assume the network operates according to a simple collision model, where a transmission from sensor $i$ is successful on slot $t$ if and only if it is the only sensor that transmits on that slot: 
\begin{equation} \label{eq:mac} 
 u(t) = \sum_{i=1}^N\omega_i(t)\left[\alpha_i(t)\prod_{j \neq i} (1-\alpha_j(t))\right] 
 \end{equation} 
 The above utility function is non-separable.   Concurrent work in \cite{nicolo-multi-user-sensors} considers a similar utility function for wireless energy harvesting applications. 
  

\subsection{Contributions and related work} 

The framework of partial knowledge at each user is similar in spirit 
to a \emph{multi-player Bayesian game} \cite{game-theory-book}\cite{multi-agent-systems}.  
 There, the goal is to design competitive strategies that lead to a Nash equilibrium.  
 This is significantly different from the goal of the current paper.   The current paper is not concerned with competition or equilibrium.  Rather, there is a single utility function that all users desire to maximize.  Distributed  algorithms are developed to maximize time average utility subject to 
 time average penalty constraints. 

This paper shows that an optimal distributed algorithm can be designed by having users correlate their decisions through an independent source of common randomness (Section \ref{section:optimality}).   Related notions of commonly shared randomness are used in game theory to define a \emph{correlated equilibrium}, which is typically easier to compute than a standard Nash equilibrium \cite{aumann-correlated-eq1}\cite{aumann-correlated-eq2}\cite{multi-agent-systems}\cite{game-theory-book}.  For the current paper, the shared randomness is crucial for solving the distributed optimization problem.  This paper shows that optimality can be achieved by using a shared random variable with $K+1$ possible outcomes, where $K$ is the number of penalty constraints.  The solution is computable through a linear program. Unfortunately, the linear program can have a very large number of variables, even for 2-user problems.  A reduction to polynomial complexity is shown to be possible in certain cases (Section \ref{section:LP}).  This paper also develops an online algorithm that chooses pure strategies every slot based on a set of weights that are updated at the end of each slot (Section \ref{section:DPP}). The online technique is based on Lyapunov optimization concepts \cite{sno-text}\cite{now}\cite{neely-fairness-ton}. 

Much prior work on network optimization treats scenarios where it is possible to find distributed solutions with no loss of optimality.  For example, network flow problems that are described by linear or separable convex programs can be optimally solved in a distributed manner \cite{lin-shroff-cdc04}\cite{xiao-johansson-boyd-toc}\cite{low-flow-control}\cite{neely-fairness-ton}.   Problems where network nodes want to average sensor data \cite{rabbat-sensor-nets} or compute convex programs \cite{neely-dist-comp} have distributed solutions.  Work in \cite{van-roy-dist} solves for an optimal vector of parameters associated with an infinite horizon Markov decision problem using distributed agents.  
Work in \cite{jiang-entropy}\cite{shah-reversible}\cite{jiang-walrand-book} develops distributed multiple access methods that converge to optimality.  However, the above problems do not have random events that create a fundamental gap between centralized and distributed performance.  

Recent work in \cite{Nayyar-thesis} derives structural results for 
distributed optimization in Markov decision systems with delayed information.  Such problems \emph{do} exhibit gaps between centralized and distributed scheduling.  The use of \emph{private information} 
in \cite{Nayyar-thesis} is similar in spirit to the assumption in the current paper that each user observes
its own random event $\omega_i(t)$. The work \cite{Nayyar-thesis} derives a sufficient statistic for dynamic programming. 
It does not consider time average constraints and its solutions do not involve correlated scheduling via a pseudorandom sequence.   Recent work in \cite{nicolo-multi-user-sensors} considers distributed reporting of events with different qualities, but considers a more restrictive class of policies that do not use correlated scheduling. 
The current paper treats a different model than \cite{Nayyar-thesis} and \cite{nicolo-multi-user-sensors}, and 
shows that correlated scheduling is necessary in systems with constraints.
Further, the current paper provides complexity reduction results under a preferred action property (Section \ref{section:LP}) and provides an online algorithm that does not require a-priori knowledge of event probabilities (Section \ref{section:DPP}). 

\section{Example sensor network problem} \label{section:sensor-network} 

This section illustrates the benefits of using a common source of randomness
for a simple example network. 
Suppose the network has two sensors that operate over time slots $t \in \{0, 1, 2, \ldots\}$.  Every slot, the sensors observe the state of a particular system 
and choose whether or not to report their observations to a fusion center. 
Let $\omega_i(t)$ be a binary variable that is 1 if sensor $i$ observes an event on slot $t$, and $0$ else.  
Let  $\alpha_1(t)$ and $\alpha_2(t)$ be the slot $t$ decision variables, so that $\alpha_i(t)=1$ if sensor $i$ 
reports on slot $t$, and $\alpha_i(t)=0$ otherwise.  Suppose the fusion center trusts sensor 1 more than sensor 2.  
The utility $u(t)$ is: 
\[ u(t) = \min[\omega_1(t)\alpha_1(t) + \omega_2(t)\alpha_2(t)/2, 1] \]
so that the deterministic function $\hat{u}(\cdot)$ is given by: 
\begin{equation} \label{eq:example-u}
 \hat{u}(\alpha_1, \alpha_2, \omega_1, \omega_2) = \min[\omega_1\alpha_1 + \omega_2\alpha_2/2, 1] 
 \end{equation} 
Therefore, $u(t)\in\{0,1/2, 1\}$ for all slots $t$.  
If $\omega_1(t)=1$ and sensor 1 reports on slot $t$, there is no utility increase if sensor 2 also reports.

Each report uses one unit of power.   Let $p_i(t)$ be the power incurred by sensor $i$ on slot $t$, being $1$ if it reports its observation, and $0$ otherwise.  The power penalties for $i \in \{1, 2\}$ are:  
\begin{equation} \label{eq:p-example} 
 p_i(t) = \alpha_i(t)  
 \end{equation} 
so that $\hat{p}_i(\alpha_1, \alpha_2, \omega_1, \omega_2) = \alpha_i$ for $i \in \{1, 2\}$. 
Each sensor $i$ can choose \emph{not} to report an observation in order to save power.  The difficulty is that neither sensor knows what event was observed by the other.  Therefore, a distributed algorithm might send reports from \emph{both} sensors on a given slot.   A centralized scheduler would avoid this because it wastes power without increasing utility. 

Suppose that $\omega_1(t)$ and $\omega_2(t)$ are independent of each other and i.i.d. over slots, with: 
\begin{eqnarray*}
 Pr[\omega_1(t)=1] = 3/4 , & Pr[\omega_1(t)=0] = 1/4 \\
 Pr[\omega_2(t)=1] = 1/2 , & Pr[\omega_2(t)=0] = 1/2 
 \end{eqnarray*}
To fix a specific numerical example, consider the following problem: 
\begin{eqnarray}
\mbox{Maximize:} &  \overline{u} \label{eq:ex1}  \\
\mbox{Subject to:} & \overline{p}_1 \leq 1/3  \: \: , \: \: 
 \overline{p}_2 \leq1/3 \label{eq:ex2} \\
& \mbox{Decisions are distributed}\label{eq:ex3}  
\end{eqnarray}

\subsection{Independent reporting} 

Consider the following class of \emph{independent scheduling} algorithms:  Each sensor $i$ independently decides to report with probability $\theta_i$ if it observes $\omega_i(t)=1$ (it does not report if $\omega_i(t)=0$).   Since $\bv{\omega}(t)$ is i.i.d. over slots, the resulting sequences $\{u(t)\}_{t=0}^{\infty}$, $\{p_1(t)\}_{t=0}^{\infty}$, $\{p_2(t)\}_{t=0}^{\infty}$ are i.i.d. over slots.  The time averages are: 
\begin{eqnarray*}
\overline{p}_1 = \frac{3}{4}\theta_1 \: \: \: \: \:  , \: \: \: \: \: 
\overline{p}_2 = \frac{1}{2}\theta_2 
\end{eqnarray*}
\begin{eqnarray*}
\overline{u} &=& 
\expect{u(t)|\omega_1(t)=1,\omega_2(t)=0}\frac{3}{4}\frac{1}{2} \\
&& + \expect{u(t)|\omega_1(t)=0,\omega_2(t)=1}\frac{1}{4}\frac{1}{2} \\
&& + \expect{u(t)|\omega_1(t)=\omega_2(t)=1}\frac{3}{4}\frac{1}{2} \\
&=& \frac{3}{4}\frac{1}{2}\theta_1 + \frac{1}{4}\frac{1}{2}(\theta_2/2) + \frac{3}{4}\frac{1}{2}(\theta_1 + (1-\theta_1)\theta_2/2)
\end{eqnarray*}

For this class of algorithms, utility is maximized by choosing $\theta_1$ and $\theta_2$ to meet the power constraints with equality. This leads to
$\theta_1 = 4/9$, $\theta_2 = 2/3$.  The resulting utility is: 
\[ \overline{u} =  4/9  \approx 0.44444 \]

\subsection{Correlated reporting} \label{section:correlated-reports} 

As an alternative, consider the following three strategies: 
\begin{itemize} 
\item Strategy 1:  $\omega_1(t)=1 \implies \alpha_1(t)=1$ (else, $\alpha_1(t)=0$).  Sensor 2 always chooses $\alpha_2(t)=0$. 

\item Strategy 2: $\omega_2(t)=1 \implies \alpha_2(t)=1$ (else, $\alpha_2(t)=0$). Sensor 1 always chooses $\alpha_1(t)=0$. 

\item Strategy 3: $\omega_1(t)=1 \implies \alpha_1(t)=1$ (else, $\alpha_1(t)=0$).  $\omega_2(t)=1 \implies \alpha_2(t)=1$ (else, $\alpha_2(t)=0$). 
\end{itemize} 
The above three strategies are \emph{pure strategies} because $\alpha_i(t)$ is a deterministic function of $\omega_i(t)$ for each sensor $i$. Now let $X(t)$ be an external source of randomness that is commonly known at both sensors on slot $t$.   Assume $X(t)$ is independent of everything else in the system, and is i.i.d. over slots with: 
\begin{eqnarray*}
Pr[X(t)=1] &=& \theta_1 \\
Pr[X(t)=2] &=& \theta_2 \\
Pr[X(t)=3] &=& \theta_3
\end{eqnarray*}
where $\theta_1, \theta_2, \theta_3$ are probabilities that sum to 1. Consider the following algorithm:  On slot $t$, if $X(t)=m$ then choose strategy $m$, where $m \in \{1, 2, 3\}$. 
This algorithm can be implemented by letting $X(t)$ be a pseudorandom sequence that is installed in both sensors at time 0. The resulting time averages are: 
\begin{eqnarray*}
&\overline{p}_1 = (\theta_1+\theta_3)\frac{3}{4}  \: \: , \: \: \overline{p}_2 = (\theta_2+\theta_3)\frac{1}{2} \\
&\overline{u} = \theta_1\frac{3}{4} + \theta_2\frac{1}{2}\frac{1}{2} + \theta_3(\frac{3}{4} + \frac{1}{4}\frac{1}{2}\frac{1}{2})
\end{eqnarray*}
A simple linear program can be used to compute the optimal  $\theta_1, \theta_2, \theta_3$ probabilities for this algorithm structure.  
The result is $\theta_1= 1/3$, $\theta_2 = 5/9$, $\theta_3=1/9$.  
The resulting time average utility is: 
\[ \overline{u} =  23/48 \approx 0.47917 \]
This is strictly larger than the time average utility of $0.44444$ achieved by the independent reporting algorithm. Thus, performance can be strictly improved by correlating
reports via a common source of randomness.  
Alternatively, the same time averages can be achieved by \emph{time sharing}:  The two sensors agree 
to use a periodic schedule of period 9 slots.  The first $3$ slots of the period use strategy 1, the next 5 slots use strategy 2, and the final slot uses strategy 3.

\subsection{Centralized reporting}

Suppose sensors coordinate by observing $(\omega_1(t), \omega_2(t))$ and then cooperatively selecting $(\alpha_1(t), \alpha_2(t))$. It turns out that an optimal centralized
policy is as follows \cite{sno-text}:  Every slot $t$, observe $(\omega_1(t),\omega_2(t))$ and choose $(\alpha_1(t), \alpha_2(t))$ as follows: 
\begin{itemize} 
\item $(\omega_1(t), \omega_2(t))=(0,0) \implies (\alpha_1(t), \alpha_2(t))=(0,0)$.

\item $(\omega_1(t), \omega_2(t))=(0,1) \implies (\alpha_1(t), \alpha_2(t))=(0,1)$. 

\item If $(\omega_1(t), \omega_2(t))=(1,0)$, independently choose: 
\[ (\alpha_1(t), \alpha_2(t)) =  \left\{ \begin{array}{ll}
(1,0)  &\mbox{ with probability $8/9$} \\
(0,0)   & \mbox{ with probability $1/9$} 
\end{array}
\right. \]

\item If $(\omega_1(t), \omega_2(t))=(1,1)$, independently choose: 
\[ (\alpha_1(t), \alpha_2(t)) =  \left\{ \begin{array}{ll}
(0,1)   & \mbox{ with probability $5/9$} \\
(0,0) & \mbox{ with probability $4/9$}  
\end{array}
\right. \]
\end{itemize} 
The resulting optimal centralized time average utility is: 
\[ \overline{u} =  0.5  \]
This is larger than the value $0.47917$ achieved by the distributed algorithm of the previous subsection. 

The question remains:  Is it possible to construct some other distributed algorithm that yields $\overline{u} > 0.47917$? 
Results in the next section imply this is impossible.   Thus, the correlated reporting algorithm of the previous subsection 
optimizes time average utility over all possible distributed algorithms that satisfy 
the constraints.  Therefore, for this example, there is a \emph{fundamental gap} between the performance of the best centralized algorithm and the best distributed algorithm.

\section{Characterizing optimality} \label{section:optimality}

This section considers the general $N$ user problem and 
characterizes optimality over all possible distributed algorithms.   Recall that: 
\begin{eqnarray*}
\bv{\omega}(t) &\in& \Omega = \Omega_1 \times \cdots \times \Omega_N \\ 
\bv{\alpha}(t) &\in& \script{A} = \script{A}_1 \times \cdots \times \script{A}_N
\end{eqnarray*}
where the vectors $\bv{\omega}(t)$ are i.i.d. over slots (possibly correlated over entries in each slot).  
Assume that the sets $\Omega_i$ and $\script{A}_i$ are finite with sizes denoted $|\Omega_i|$ and $|\script{A}_i|$. 
For each $\bv{\omega} \in \Omega$ define: 
\[ \pi(\bv{\omega}) = Pr[\bv{\omega}(t) = \bv{\omega}] \] 

Define the \emph{history} $\script{H}(t)$ by:  
\[ \script{H}(t) \defequiv \{(\bv{\omega}(0), \bv{\alpha}(0)), \ldots, (\bv{\omega}(t-1), \bv{\alpha}(t-1))\} \]
This section considers all distributed algorithms, including those where all users know the full history $\script{H}(t)$.  Such information might be available through a feedback message that specifies $(\bv{\alpha}(t), \bv{\omega}(t))$ at the end of each slot $t$.  Theorem \ref{thm:optimality} shows that optimality can be achieved \emph{without} this history information. 

First, it is important to make the distributed scheduling constraint \eqref{eq:p3} mathematically precise.  One might attempt to use the following condition.  For all slots $t$, the decisions made by each user $i \in \{1, \ldots, N\}$ must satisfy: 
\begin{eqnarray}
Pr[\alpha_i(t) = \alpha_i | \omega_i(t)=\omega_i, \script{H}(t)] \nonumber \\
= Pr[\alpha_i(t) = \alpha_i | \bv{\omega}(t) = \bv{\omega}, \script{H}(t)] \label{eq:bad-distributed-constraint} 
\end{eqnarray}
for all vectors $\bv{\omega} = (\omega_1, \ldots, \omega_N) \in \Omega_1 \times \cdots \times \Omega_N$ and all $\alpha_i \in \script{A}_i$. 
The condition \eqref{eq:bad-distributed-constraint} specifies that $\alpha_i(t)$ is conditionally independent of $(\omega_j(t))|_{j\neq i}$ given $\omega_i(t)$, $\script{H}(t)$.  While this condition is indeed required, it turns out that it is not restrictive enough.  Appendix B provides an example utility function for which there is an algorithm that satisfies \eqref{eq:bad-distributed-constraint} but yields expected utility strictly larger than that of any ``true'' distributed algorithm (as defined in the next subsection).

\subsection{The distributed scheduling constraint} 

An algorithm for selecting $\bv{\alpha}(t)$ over slots $t \in \{0, 1, 2, \ldots\}$ is \emph{distributed} if:
\begin{itemize} 
\item There is an abstract set $\script{X}$, called a \emph{common information set}. 
\item There is a sequence of \emph{commonly known 
random elements} $X(t) \in \script{X}$ such that $\bv{\omega}(t)$ is independent of $X(t)$ 
for each $t \in \{0, 1, 2, \ldots\}$.
\item There are deterministic functions $f_{i}(\omega_i, X)$ for each $i \in \{1, \ldots, N\}$ of the form: 
\[ f_{i}: \Omega_i \times \script{X} \rightarrow \script{A}_i \]
\item The decisions $\alpha_i(t)$ satisfy the following for all slots $t$: 
\begin{equation} \label{eq:distributed-constraint}
\alpha_i(t) = f_{i}(\omega_i(t), X(t)) \: \:  \mbox{for all $i \in \{1, \ldots, N\}$}
\end{equation} 
\end{itemize} 

The above definition includes a wide class of algorithms.  Intuitively, the random elements $X(t)$ can be designed as any source of 
common randomness on which users can base their decisions.  For example, $X(t)$ can be designed to have the form: 
\[ X(t) = (t, \script{H}(t), Y(t)) \]
where $Y(t)$ is a random element with support and distribution that can possibly depend on $\script{H}(t)$ as well as past values $Y(\tau)$ for $\tau < t$.   The only restriction is that $X(t)$ is independent of $\bv{\omega}(t)$.  Because the $\bv{\omega}(t)$ vectors are i.i.d. over slots, $X(t)$ can be based on any events that occur before slot $t$.  

\subsection{The optimization problem} 

For notational convenience, define: 
\begin{eqnarray*}
p_0(t) &\defequiv& -u(t)\\
 \hat{p}_0(\bv{\alpha}(t), \bv{\omega}(t)) &\defequiv& -\hat{u}(\bv{\alpha}(t), \bv{\omega}(t)) 
 \end{eqnarray*}
Maximizing the time average expectation of $u(t)$ is equivalent to minimizing the time average expectation of $p_0(t)$.  
For each $k \in \{0, 1, \ldots, K\}$ and each slot $t>0$ define: 
\begin{eqnarray*}
\overline{p}_k(t) \defequiv \frac{1}{t}\sum_{\tau=0}^{t-1} \expect{p_k(\tau)} 
\end{eqnarray*}
The goal is to design a distributed algorithm that solves the following: 
\begin{eqnarray}
\mbox{Minimize:} & \limsup_{t\rightarrow\infty} \overline{p}_0(t) \label{eq:q1} \\
\mbox{Subject to:} & \limsup_{t\rightarrow\infty}\overline{p}_k(t) \leq c_k \: \: \forall k \in \{1, \ldots, K\} \label{eq:q2} \\
& \mbox{Condition \eqref{eq:distributed-constraint} holds $\forall t \in \{0, 1, 2, \ldots\}$} \label{eq:q3} 
\end{eqnarray}

It is assumed throughout this paper that the constraints \eqref{eq:q2}-\eqref{eq:q3} are \emph{feasible}.  Define $p_0^{opt}$ as the infimum of all limiting $\overline{p}_0(t)$ values \eqref{eq:q1} achievable by algorithms that satisfy the constraints \eqref{eq:q2}-\eqref{eq:q3}.  The infimum is finite because $p_0(t)$ takes values in the same bounded set for all slots $t$.

\subsection{Optimality via correlated scheduling}

 A \emph{pure strategy} is defined as a vector-valued function: 
 \[ \bv{g}(\bv{\omega}) = (g_1(\omega_1), g_2(\omega_2), \ldots, g_N(\omega_N)) \] 
where $g_i(\omega_i) \in \script{A}_i$ for all $i \in \{1, \ldots, N\}$ and all  $\omega_i \in \Omega_i$. The function $\bv{g}(\bv{\omega})$ specifies a distributed decision rule where each user $i$ chooses $\alpha_i$ as a deterministic function of $\omega_i$.  Specifically, $\alpha_i = g_i(\omega_i)$. The total number of pure strategy functions $\bv{g}(\bv{\omega})$ is $\prod_{i=1}^N|\script{A}_i|^{|\Omega_i|}$.  Define $M$ as this number, and enumerate all these vectors by $\bv{g}^{(m)}(\bv{\omega})$ for $m \in \{1, \ldots, M\}$.  For each $m \in \{1, \ldots, M\}$ and $k \in \{0, 1, \ldots, K\}$ define: 
\begin{equation} \label{eq:rm} 
 r_k^{(m)} \defequiv \sum_{\bv{\omega}\in\Omega} \pi(\bv{\omega})\hat{p}_k(\bv{g}^{(m)}(\bv{\omega}), \bv{\omega}) 
 \end{equation} 
The value  $r_k^{(m)}$ is the expected value of $p_k(t)$  given that users implement strategy $\bv{g}^{(m)}(\bv{\omega})$ on slot $t$. 

Consider a randomized algorithm that, every slot $t$, independently uses strategy $\bv{g}^{(m)}(\bv{\omega})$ with 
probability $\theta_m$.   For each $k \in \{0, 1, \ldots, K\}$, the expected penalty $\expect{p_k(t)}$ under such a strategy is: 
\begin{eqnarray*}
 \expect{p_k(t)} &=& \sum_{m=1}^M\theta_m\expect{\hat{p}_k\left(\bv{g}^{(m)}(\bv{\omega}(t)), \bv{\omega}(t)\right)} \\
 &=& \sum_{m=1}^M\theta_m r_k^{(m)} 
\end{eqnarray*}
The following linear program optimizes over the $\theta_m$ probabilities for this specific algorithm structure:

\begin{eqnarray}
\mbox{Minimize:} && \sum_{m=1}^M\theta_m r_0^{(m)} \label{eq:lp1}  \\
\mbox{Subject to:} && \sum_{m=1}^M\theta_m r_k^{(m)} \leq c_k \: \: \: \: \forall k \in \{1, \ldots, K\} \label{eq:lp2}  \\
&& \theta_m \geq 0 \: \: \: \: \forall m \in \{1, \ldots, M\} \label{eq:lp3} \\
&& \sum_{m=1}^M \theta_m = 1  \label{eq:lp4} 
\end{eqnarray}
The objective \eqref{eq:lp1} corresponds to minimizing $\expect{p_0(t)}$, the constraints \eqref{eq:lp2} ensure 
$\expect{p_k(t)} \leq c_k$ for $k \in \{1, \ldots, K\}$, and the constraints \eqref{eq:lp3}-\eqref{eq:lp4} ensure the $\theta_m$ values form a valid probability mass function. 
Such a randomized algorithm does not use the history $\script{H}(t)$. 
The next theorem shows this algorithm structure is optimal.

\begin{thm} \label{thm:optimality} Suppose the problem \eqref{eq:q1}-\eqref{eq:q3} is feasible.  Then 
the linear program \eqref{eq:lp1}-\eqref{eq:lp4} is feasible, and the optimal objective value \eqref{eq:lp1} is 
equal to $p_0^{opt}$.  Furthermore, 
there exist probabilities $(\theta_1, \ldots, \theta_{M})$ that solve the linear program and satisfy 
$\theta_m>0$ for at most $K+1$ values of $m \in \{1, \ldots, M\}$. 
\end{thm}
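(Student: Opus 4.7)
The plan is to prove the equality of the two optima together with LP feasibility via two matching inequalities, and then obtain the sparsity conclusion from standard extreme-point theory. The ``averaging'' inequality shows that any feasible distributed policy projects onto an LP-feasible $\theta^*$ whose objective does not exceed the policy's performance, simultaneously establishing LP feasibility and one direction. The ``realization'' inequality shows that any LP-feasible $(\theta_m)$ can be implemented as a genuine distributed policy whose limiting penalties equal $\sum_m\theta_m r_k^{(m)}$, giving the reverse direction.

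For the averaging inequality, I would fix $\varepsilon>0$ and pick a distributed policy $\alpha_i(t)=f_i(\omega_i(t),X(t))$ satisfying \eqref{eq:q2} with $\limsup_t\overline{p}_0(t)\le p_0^{opt}+\varepsilon$. For every $X(t)=x$, the tuple of slice maps $(\omega_i\mapsto f_i(\omega_i,x))_{i=1}^N$ is itself a pure strategy, say $\bv{g}^{(m(x))}$ for a unique index $m(x)\in\{1,\dots,M\}$. Since $X(t)$ is independent of $\bv{\omega}(t)$, conditioning on $X(t)$ yields
\[ \expect{p_k(t)\mid X(t)=x}=\sum_{\bv{\omega}\in\Omega}\pi(\bv{\omega})\hat{p}_k(\bv{g}^{(m(x))}(\bv{\omega}),\bv{\omega})=r_k^{(m(x))}, \]
and therefore $\expect{p_k(t)}=\sum_{m=1}^M\theta_m(t)r_k^{(m)}$ with $\theta_m(t):=\Pr[m(X(t))=m]$. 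The Ces\`aro averages $\bar\theta_m(t):=\frac{1}{t}\sum_{\tau=0}^{t-1}\theta_m(\tau)$ lie in the compact probability simplex, and $\overline{p}_k(t)=\sum_m\bar\theta_m(t)r_k^{(m)}$. I would then extract a subsequence $t_n\to\infty$ along which $\overline{p}_0(t_n)\to\limsup_t\overline{p}_0(t)$ and, by a further refinement, $\bar\theta(t_n)\to\theta^*$. The limit $\theta^*$ is an LP-feasible point of objective value at most $p_0^{opt}+\varepsilon$; letting $\varepsilon\downarrow 0$ yields both LP feasibility and the inequality that the LP optimum is at most $p_0^{opt}$.

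For the realization inequality, given any LP-feasible $(\theta_m)$ I would set $\script{X}=\{1,\dots,M\}$, draw $X(t)$ i.i.d.\ across slots with $\Pr[X(t)=m]=\theta_m$ independently of $\bv{\omega}$, and define $f_i(\omega_i,m):=g_i^{(m)}(\omega_i)$; this is a valid distributed policy in the sense of \eqref{eq:distributed-constraint}. Since $\{(\bv{\omega}(t),X(t))\}$ is i.i.d.\ over $t$, so is $\{p_k(t)\}$ with mean $\sum_m\theta_m r_k^{(m)}$, so the strong law gives $\overline{p}_k(t)\to\sum_m\theta_m r_k^{(m)}$, yielding feasibility and $p_0^{opt}\le\sum_m\theta_m r_0^{(m)}$. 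Taking the infimum over LP-feasible $\theta$ completes the equality. For the sparsity conclusion, the LP feasible region is a nonempty compact polytope in $\mathbb{R}^M$ carved out by the equality $\sum_m\theta_m=1$, the $K$ inequalities \eqref{eq:lp2}, and the $M$ nonnegativities, so its optimum is attained at a vertex. Introducing $K$ nonnegative slacks converts the LP to standard form with $K+1$ linearly independent equalities in $M+K$ variables; hence every basic feasible solution has at most $K+1$ positive components, and in particular at an optimal vertex at most $K+1$ of the $\theta_m$ are positive.

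The hard part is the averaging step: associating each outcome of $X(t)$ with its pure strategy $\bv{g}^{(m(x))}$ is immediate, but justifying the passage to the limit requires care, because $X(t)$ may carry the entire history $\script{H}(t)$ and its marginal distribution may therefore vary with $t$. Compactness of the probability simplex and the extraction-of-subsequence argument handle this cleanly, after which the realization direction and the basic-feasible-solution argument are standard.
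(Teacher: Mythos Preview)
Your proof is correct and the main ``averaging'' step matches the paper almost exactly: both arguments observe that each realization $X(t)=x$ determines a pure strategy $\bv{g}^{(m(x))}$, conclude that $\expect{\bv{p}(t)}$ is a convex combination of the $\bv{r}^{(m)}$, and then extract convergent subsequences via compactness to land on a feasible point with objective at most $p_0^{opt}$. The only real difference is in the sparsity conclusion. The paper works in the $(K+1)$-dimensional penalty space, showing the optimal vector $(p_0^{opt},r_1,\dots,r_K)$ lies on the \emph{boundary} of the convex hull $\script{R}=\conv{\bv{r}^{(1)},\dots,\bv{r}^{(M)}}$, and then invokes the boundary refinement of Carath\'eodory's theorem to get at most $K+1$ nonzero $\theta_m$. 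You instead stay in the $\theta$-simplex and appeal to basic-feasible-solution theory after adding $K$ slacks, which is arguably more direct for a statement phrased as a linear program. Both routes are standard; the paper's gives a cleaner geometric picture of why the bound is $K+1$ rather than $K+2$, while yours avoids the separate step of verifying the boundary condition.
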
 

\begin{proof} 
See Appendix A. 
\end{proof}

\section{Reduced complexity} \label{section:LP} 

The linear 
program \eqref{eq:lp1}-\eqref{eq:lp4} uses variables $(\theta_1, \theta_2, \ldots, \theta_M)$, where $M$ is the 
number of pure strategies: 
\[ M = \prod_{i=1}^N|\script{A}_i|^{|\Omega_i|} \]

The 2-user sensor network example from Section \ref{section:sensor-network} has $|\script{A}_i|=|\Omega_i| = 2$ for $i \in \{1, 2\}$, for a total of $2^2=4$ strategy functions $g_i(\omega_i)$ for each user---hence a total of $M=16$ functions $\bv{g}(\bv{\omega})$.  However, for each user $i$, the two strategy functions $g_i(\omega_i)$ that give $g_i(0)=1$ can be removed from consideration (as it is useless for user $i$ to report if it observes no event). Thus, the effective number of strategy functions $g_i(\omega_i)$ for each user is only two, leaving only four  functions $\bv{g}(\bv{\omega})=(g_1(\omega_1), g_2(\omega_2))$.  The optimal probabilities for switching between these four is given in Section \ref{section:correlated-reports}, where it is seen that only $K+1=3$ strategies have non-zero probabilities. 

For general problems,  the value of $M$ can be very large.  The remainder of this section shows that, if certain conditions hold, the set of strategy functions can be pruned to a smaller set without loss of optimality. 
 For example, consider a two-user problem with binary actions, so that $|\script{A}_i|= 2$ for $i \in \{1, 2\}$.  Then: 
\[ M = 2^{|\Omega_1|+ |\Omega_2|} \]
If certain conditions hold, strategies can be restricted to a set of size $\tilde{M}$, where: 
\[ \tilde{M} = (|\Omega_1|+1)(|\Omega_2|+1) \]
Thus, an exponentially large set is pruned to a smaller set with polynomial size.

\subsection{The preferred action property}

Suppose the sets $\script{A}_i$ and $\Omega_i$ for each user $i \in \{1, \ldots, N\}$ are given by: 
\begin{eqnarray}
\script{A}_i &=& \{0, 1, \ldots, |\script{A}_i|-1\} \label{eq:ai-set}  \\
\Omega_i &=& \{0, 1, \ldots, |\Omega_i|-1\} \label{eq:omegai-set} 
\end{eqnarray}
For notational convenience, for each $i \in \{1, \ldots, N\}$ let $[\bv{\alpha}_{\overline{i}},\alpha_i]$ denote the $N$-dimensional
vector $\bv{\alpha} = (\alpha_1, \ldots, \alpha_N)$, where $\bv{\alpha}_{\overline{i}}$ is the $(N-1)$-dimensional vector of  $\alpha_j$ components for  $j \neq i$. This notation facilitates  comparison of two vectors that differ in just one coordinate. 
Define $\script{A}_{\overline{i}}$ and $\Omega_{\overline{i}}$ as the set of all possible $(N-1)$-dimensional vectors $\bv{\alpha}_{\overline{i}}$ and $\bv{\omega}_{\overline{i}}$, respectively.

\begin{defn} A penalty function $\hat{p}(\bv{\alpha}, \bv{\omega})$ has  the \emph{preferred action property} if for all $i \in \{1, \ldots, N\}$, 
all $\bv{\alpha}_{\overline{i}} \in \script{A}_{\overline{i}}$, and all $\bv{\omega}_{\overline{i}} \in \Omega_{\overline{i}}$, one has: 
\begin{eqnarray*}
\hat{p}([\bv{\alpha}_{\overline{i}}, \alpha], [\bv{\omega}_{\overline{i}}, \omega]) - \hat{p}([\bv{\alpha}_{\overline{i}}, \beta], [\bv{\omega}_{\overline{i}}, \omega]) \\
\geq 
\hat{p}([\bv{\alpha}_{\overline{i}}, \alpha], [\bv{\omega}_{\overline{i}}, \gamma]) - \hat{p}([\bv{\alpha}_{\overline{i}}, \beta], [\bv{\omega}_{\overline{i}}, \gamma]) 
\end{eqnarray*}
whenever $\alpha, \beta$ are values in $\script{A}_i$ that satisfy $\alpha > \beta$, and $\omega, \gamma$ are values in $\Omega_i$ that 
satisfy $\omega < \gamma$. 
\end{defn}

Intuitively, the above definition means that if user $i$ compares the difference in penalty under the actions $\alpha_i(t)=\alpha$ and $\alpha_i(t)=\beta$ (where $\alpha > \beta$), 
this difference is non-increasing in the user $i$ observation $\omega_i(t)$ (assuming all other actions and events $\bv{\alpha}_{\overline{i}}$ and $\bv{\omega}_{\overline{i}}$ are held fixed).

For example, any function $\hat{p}(\bv{\alpha}, \bv{\omega})$ that does not depend on $\bv{\omega}$ trivially satisfies the preferred action property.   This is the case for the $\hat{p}_1(\cdot)$ and $\hat{p}_2(\cdot)$ functions in \eqref{eq:p-example} used to represent power expenditures for the sensor network example of Section \ref{section:sensor-network}.  Further, the utility function \eqref{eq:example-u} in that example yields $\hat{p}_0(\cdot) = -\hat{u}(\cdot)$ that satisfies the preferred action property, as shown by the next lemma.  

\begin{lem} \label{lem:preferred1} Suppose $\script{A}_i = \{0, 1\}$ for $i \in \{1, \ldots, N\}$, $\Omega_i$ is given by 
\eqref{eq:omegai-set},  and define: 
\[ \hat{u}(\bv{\alpha}, \bv{\omega}) = \min\left[\sum_{i=1}^N\phi_i(\omega_i)\alpha_i , b\right] \]
for some (real-valued) constant $b$ and some (real-valued) non-decreasing functions $\phi_i(\omega_i)$.  Then 
the penalty function $\hat{p}_0(\bv{\alpha}, \bv{\omega}) = -\hat{u}(\bv{\alpha}, \bv{\omega})$ has the preferred action 
property. 
\end{lem}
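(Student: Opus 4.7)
The plan is to exploit the binary action space $\script{A}_i=\{0,1\}$, which collapses the preferred action inequality to a single instance ($\alpha=1$, $\beta=0$), and then reduce the claim to the monotonicity of $\min[\,\cdot\,,b]$.

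First I would rewrite the preferred action inequality for $\hat{p}_0 = -\hat{u}$: since negation flips the inequality, the condition becomes
\begin{equation*}
\hat{u}([\bv{\alpha}_{\overline{i}},1],[\bv{\omega}_{\overline{i}},\omega]) - \hat{u}([\bv{\alpha}_{\overline{i}},0],[\bv{\omega}_{\overline{i}},\omega]) \leq \hat{u}([\bv{\alpha}_{\overline{i}},1],[\bv{\omega}_{\overline{i}},\gamma]) - \hat{u}([\bv{\alpha}_{\overline{i}},0],[\bv{\omega}_{\overline{i}},\gamma])
\end{equation*}
whenever $\omega < \gamma$ in $\Omega_i$. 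Next I would fix an index $i$, an $(N-1)$-dimensional action vector $\bv{\alpha}_{\overline{i}}$, and an event vector $\bv{\omega}_{\overline{i}}$, and introduce the shorthand $S = \sum_{j\neq i}\phi_j(\omega_j)\alpha_j$. The key observation is that when $\alpha_i=0$ the $i$-th term of the sum vanishes, so $\hat{u}([\bv{\alpha}_{\overline{i}},0],[\bv{\omega}_{\overline{i}},\cdot])=\min[S,b]$ has no dependence on the $i$-th event. Hence the two "action-$0$" terms on either side of the desired inequality are identical and cancel out.

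With the cancellation done, the inequality reduces to $\min[S+\phi_i(\omega),b] \leq \min[S+\phi_i(\gamma),b]$. Since $\phi_i$ is non-decreasing and $\omega<\gamma$, we have $\phi_i(\omega)\leq\phi_i(\gamma)$, so $S+\phi_i(\omega)\leq S+\phi_i(\gamma)$, and the function $x \mapsto \min[x,b]$ is non-decreasing, yielding the claim.

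There is no real obstacle; the only subtlety is recognizing that the binary-action assumption plus the fact that $\alpha_i=0$ "erases" $\phi_i(\omega_i)$ from the utility makes the $\omega$ vs.\ $\gamma$ comparison trivial after one side of the difference cancels. If one were to attempt a generalization to $|\script{A}_i|>2$, one would need to compare $\min[S+\alpha\phi_i(\omega),b]-\min[S+\beta\phi_i(\omega),b]$ across values of $\omega$, which is still manageable from the concavity of $\min[\,\cdot\,,b]$, but for the stated lemma this extra work is unnecessary.
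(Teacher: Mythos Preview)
Your proof is correct and follows essentially the same argument as the paper. The only organizational difference is that the paper first isolates an auxiliary lemma (Lemma~\ref{lem:preferred-new}) giving three sufficient conditions for the preferred action property in the binary-action case---namely that $\hat{p}$ is non-increasing in $\bv{\omega}$ and does not depend on $\omega_i$ when $\alpha_i=0$---and then verifies these conditions for $\hat{p}_0=-\hat{u}$; you instead carry out the same cancellation-and-monotonicity argument directly for this specific $\hat{u}$, which is equally valid and in fact slightly more explicit.
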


\begin{lem} \label{lem:preferred2} 
Suppose $\script{A}_i = \{0, 1\}$ for $i \in \{1, \ldots, N\}$, $\Omega_i$ is given by \eqref{eq:omegai-set}, and define the utility function $\hat{u}(\bv{\alpha}, \bv{\omega})$ according to the multi-access example equation \eqref{eq:mac}. Then the penalty function $\hat{p}_0(\bv{\alpha}, \bv{\omega}) = -\hat{u}(\bv{\alpha}, \bv{\omega})$ has the preferred action property. 
\end{lem}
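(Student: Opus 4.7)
The plan is to compute the marginal change in $\hat{u}$ when user $i$ flips $\alpha_i$ from $0$ to $1$ (the only nontrivial case, since $\script{A}_i=\{0,1\}$ forces $\alpha=1$, $\beta=0$), and observe that this marginal change is a non-decreasing, in fact affine, function of $\omega_i$ with a nonnegative slope. Translating through the sign change $\hat{p}_0 = -\hat{u}$ then yields the required inequality immediately.

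Concretely, I would fix $i$, $\bv{\alpha}_{\overline{i}}$, $\bv{\omega}_{\overline{i}}$ and split the sum in \eqref{eq:mac} into the $j=i$ term and the remaining $j\neq i$ terms. When $\alpha_i=1$, every term with $j\neq i$ vanishes because each contains the factor $(1-\alpha_i)$; only the $j=i$ term survives, contributing $\omega_i\prod_{\ell\neq i}(1-\alpha_\ell)$. When $\alpha_i=0$, the $j=i$ term vanishes and the remaining terms sum to a quantity $C(\bv{\alpha}_{\overline{i}},\bv{\omega}_{\overline{i}})$ that is independent of $\omega_i$. Writing $A(\bv{\alpha}_{\overline{i}}) \defequiv \prod_{\ell\neq i}(1-\alpha_\ell) \in\{0,1\}$, the marginal utility of $\alpha_i=1$ over $\alpha_i=0$ therefore equals
\[
\hat{u}([\bv{\alpha}_{\overline{i}},1],[\bv{\omega}_{\overline{i}},\omega_i]) - \hat{u}([\bv{\alpha}_{\overline{i}},0],[\bv{\omega}_{\overline{i}},\omega_i]) \;=\; A(\bv{\alpha}_{\overline{i}})\,\omega_i - C(\bv{\alpha}_{\overline{i}},\bv{\omega}_{\overline{i}}).
\]

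Substituting into the preferred action inequality, the $C$ term cancels and the claim reduces to $A(\bv{\alpha}_{\overline{i}})(\gamma-\omega)\geq 0$ for all $\omega<\gamma$ with $\omega,\gamma\in\Omega_i=\{0,1,\ldots,|\Omega_i|-1\}$. Since $A(\bv{\alpha}_{\overline{i}})\in\{0,1\}$ is nonnegative and $\gamma-\omega>0$, this is immediate, which finishes the proof.

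The derivation is essentially a mechanical expansion, so there is no real obstacle; the only item requiring a moment of care is the bookkeeping that shows the $j\neq i$ terms drop out when $\alpha_i=1$, leaving a marginal utility that depends on $\omega_i$ only through a nonnegative coefficient. Once this structural observation is in hand, monotonicity in $\omega_i$ (equivalently, the preferred action property for $-\hat{u}$) is automatic.
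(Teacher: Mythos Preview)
Your proof is correct and rests on the same two structural observations the paper uses: when $\alpha_i=0$ the penalty does not depend on $\omega_i$, and when $\alpha_i=1$ the penalty is non-increasing in $\omega_i$. The only difference is packaging: the paper abstracts these two conditions into a separate sufficiency lemma (its Lemma~\ref{lem:preferred-new}, which it also reuses for Lemma~\ref{lem:preferred1}) and then checks them for the multi-access utility, whereas you compute the marginal $\hat{u}([\bv{\alpha}_{\overline{i}},1],\cdot)-\hat{u}([\bv{\alpha}_{\overline{i}},0],\cdot)=A(\bv{\alpha}_{\overline{i}})\,\omega_i-C(\bv{\alpha}_{\overline{i}},\bv{\omega}_{\overline{i}})$ explicitly and verify the inequality directly.
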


\begin{lem} \label{lem:preferred3} Suppose $\script{A}_i$ and $\Omega_i$ are given by \eqref{eq:ai-set}-\eqref{eq:omegai-set}. Define  $\hat{p}(\bv{\alpha} , \bv{\omega})$ by: 
\[ \hat{p}(\bv{\alpha}, \bv{\omega}) = \prod_{i=1}^N\phi_i(\omega_i)\psi_i(\alpha_i)   \]
where $\phi_i(\omega_i)$, $\psi_i(\alpha_i)$ are non-negative functions for all $i \in \{1, \ldots, N\}$. 
Suppose that for each $i \in \{1, \ldots, N\}$, $\phi_i(\omega_i)$ is non-increasing in $\omega_i$ and $\psi_i(\alpha_i)$ is non-decreasing in $\alpha_i$. Then $\hat{p}(\bv{\alpha}, \bv{\omega})$ has the preferred action property. 
\end{lem}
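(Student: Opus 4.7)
The plan is to exploit the multiplicative structure of $\hat{p}$ directly. Fix $i \in \{1, \ldots, N\}$, $\bv{\alpha}_{\overline{i}} \in \script{A}_{\overline{i}}$, $\bv{\omega}_{\overline{i}} \in \Omega_{\overline{i}}$, and values $\alpha > \beta$ in $\script{A}_i$, $\omega < \gamma$ in $\Omega_i$. I would introduce the abbreviation
\[
A \defequiv \prod_{j \neq i} \phi_j(\omega_j)\psi_j(\alpha_j),
\]
which is a fixed non-negative scalar because each $\phi_j, \psi_j$ is non-negative. Then both sides of the preferred-action inequality factor in the same way: the left-hand side becomes $A\,\phi_i(\omega)[\psi_i(\alpha) - \psi_i(\beta)]$ and the right-hand side becomes $A\,\phi_i(\gamma)[\psi_i(\alpha) - \psi_i(\beta)]$.

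Next I would read off the three sign facts from the hypotheses: (i) $A \geq 0$ as already noted; (ii) $\psi_i(\alpha) - \psi_i(\beta) \geq 0$ because $\alpha > \beta$ and $\psi_i$ is non-decreasing; (iii) $\phi_i(\omega) \geq \phi_i(\gamma) \geq 0$ because $\omega < \gamma$, $\phi_i$ is non-increasing, and $\phi_i$ is non-negative. Multiplying (iii) by the non-negative quantity $A[\psi_i(\alpha)-\psi_i(\beta)]$ preserves the inequality, giving
\[
A\,\phi_i(\omega)[\psi_i(\alpha)-\psi_i(\beta)] \;\geq\; A\,\phi_i(\gamma)[\psi_i(\alpha)-\psi_i(\beta)],
\]
which is exactly the preferred action condition.

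There is no real obstacle here beyond bookkeeping. The only subtlety is that the non-negativity hypotheses on both $\phi_i$ and $\psi_i$ are genuinely needed: if either family were allowed to change sign, the product of a monotone-decreasing factor with a non-negative difference could reverse direction when multiplied by $A$, and the inequality could flip. So I would emphasize in the write-up where each non-negativity assumption is used, and keep the monotonicity directions of $\phi_i$ (non-increasing in $\omega_i$) and $\psi_i$ (non-decreasing in $\alpha_i$) explicit at the step where (ii) and (iii) are combined.
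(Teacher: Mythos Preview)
Your proposal is correct and follows essentially the same route as the paper: both factor out the non-negative product $\prod_{j\neq i}\phi_j(\omega_j)\psi_j(\alpha_j)$, reduce the preferred-action inequality to $\phi_i(\omega)[\psi_i(\alpha)-\psi_i(\beta)] \geq \phi_i(\gamma)[\psi_i(\alpha)-\psi_i(\beta)]$, and finish using the monotonicity of $\psi_i$ and $\phi_i$. Your write-up is a bit more explicit about where each non-negativity hypothesis is used, but the argument is the same.
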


\begin{lem}\label{lem:preferred4}  Suppose $\script{A}_i$ and $\Omega_i$ are given by \eqref{eq:ai-set}-\eqref{eq:omegai-set}. Suppose $\hat{p}_1(\bv{\alpha}, \bv{\omega}), \ldots, \hat{p}_R(\bv{\alpha}, \bv{\omega})$ are a collection of functions that have the preferred action property (where $R$ is a given positive integer). Then for any non-negative weights $w_1, \ldots, w_R$, the following function has the preferred action property: 
\[ \hat{p}(\bv{\alpha}, \bv{\omega}) = \sum_{r=1}^R w_r\hat{p}_r(\bv{\alpha}, \bv{\omega}) \]
\end{lem}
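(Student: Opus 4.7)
The plan is to observe that the preferred action property is defined by a single inequality of the form $A_r - B_r \geq C_r - D_r$ (with $A_r, B_r, C_r, D_r$ being values of $\hat{p}_r$ at four specific argument tuples), and such linear inequalities are trivially preserved under non-negative scaling and addition. So the proof is essentially one line, and the main work is just setting up notation cleanly so that the reader sees the four tuples clearly.

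First, I would fix arbitrary $i \in \{1,\ldots,N\}$, arbitrary $\bv{\alpha}_{\overline{i}} \in \script{A}_{\overline{i}}$, $\bv{\omega}_{\overline{i}} \in \Omega_{\overline{i}}$, arbitrary $\alpha,\beta \in \script{A}_i$ with $\alpha > \beta$, and arbitrary $\omega,\gamma \in \Omega_i$ with $\omega < \gamma$. Then, for each $r \in \{1,\ldots,R\}$, the preferred action property applied to $\hat{p}_r$ gives
\begin{eqnarray*}
\hat{p}_r([\bv{\alpha}_{\overline{i}}, \alpha], [\bv{\omega}_{\overline{i}}, \omega]) - \hat{p}_r([\bv{\alpha}_{\overline{i}}, \beta], [\bv{\omega}_{\overline{i}}, \omega]) \\
\geq \hat{p}_r([\bv{\alpha}_{\overline{i}}, \alpha], [\bv{\omega}_{\overline{i}}, \gamma]) - \hat{p}_r([\bv{\alpha}_{\overline{i}}, \beta], [\bv{\omega}_{\overline{i}}, \gamma]).
\end{eqnarray*}

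Next, since $w_r \geq 0$, multiplying the above inequality by $w_r$ preserves its direction. Summing the resulting $R$ inequalities and using linearity of the sum, the left-hand side becomes $\sum_{r=1}^R w_r \hat{p}_r([\bv{\alpha}_{\overline{i}}, \alpha], [\bv{\omega}_{\overline{i}}, \omega]) - \sum_{r=1}^R w_r \hat{p}_r([\bv{\alpha}_{\overline{i}}, \beta], [\bv{\omega}_{\overline{i}}, \omega])$, which by definition of $\hat{p}$ equals $\hat{p}([\bv{\alpha}_{\overline{i}}, \alpha], [\bv{\omega}_{\overline{i}}, \omega]) - \hat{p}([\bv{\alpha}_{\overline{i}}, \beta], [\bv{\omega}_{\overline{i}}, \omega])$, and analogously for the right-hand side. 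Since $i$, $\bv{\alpha}_{\overline{i}}$, $\bv{\omega}_{\overline{i}}$, $\alpha > \beta$, and $\omega < \gamma$ were arbitrary, the inequality defining the preferred action property holds for $\hat{p}$.

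There is no real obstacle; the only thing to be careful about is the sign of the weights ($w_r \geq 0$ is essential, since negative weights would flip the direction of the inequality) and simply carrying the four argument tuples through the summation without transcription error. I would present the proof in three compact display lines: state the per-$r$ inequality, multiply by $w_r$ and sum, and identify the result with the defining inequality for $\hat{p}$.
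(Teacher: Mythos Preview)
Your proposal is correct and matches the paper's own proof essentially line for line: fix $i$, $\bv{\alpha}_{\overline{i}}$, $\bv{\omega}_{\overline{i}}$, $\alpha>\beta$, $\omega<\gamma$, invoke the preferred action inequality for each $\hat{p}_r$, multiply by $w_r\geq 0$, and sum over $r$. There is nothing to add or change.
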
 

The proofs of Lemmas \ref{lem:preferred1}-\ref{lem:preferred4} are given in Appendix C. 
 
\subsection{Independent events and reduced complexity}

Consider the special case when the components of $\bv{\omega}(t) = (\omega_1(t), \ldots, \omega_N(t))$ are mutually independent, so that: 
\begin{eqnarray}
\pi(\bv{\omega}) = \prod_{i=1}^Nq_i(\omega_i) \label{eq:independence} 
\end{eqnarray}
where: 
\[ q_i(\omega_i) \defequiv Pr[\omega_i(t)=\omega_i] \]
Without loss of generality, assume $q_i(\omega_i)>0$ for all $i \in \{1, \ldots, N\}$ and all $\omega_i \in \Omega_i$. 
Recall that a pure strategy $\bv{g}(\bv{\omega})$ is composed of individual \emph{strategy functions} $g_i(\omega_i)$ for each user $i$: 
\[ \bv{g}(\bv{\omega}) = (g_1(\omega_1), \ldots, g_N(\omega_N)) \]

\begin{thm} \label{thm:non-decreasing} (Non-decreasing strategy functions) Suppose $\script{A}_i$ and $\Omega_i$ are given by \eqref{eq:ai-set}-\eqref{eq:omegai-set}. If all penalty functions $\hat{p}_k(\bv{\alpha}, \bv{\omega})$ for $k \in \{0, 1, \ldots, K\}$ have the preferred action property, and if the random event process $\bv{\omega}(t)$ satisfies
the independence property \eqref{eq:independence}, 
then it suffices to restrict attention to strategy functions $g_i(\omega_i)$ that are non-decreasing in $\omega_i$. 
\end{thm}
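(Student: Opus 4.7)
The plan is to reduce the statement to a single-penalty sub-lemma via LP duality on \eqref{eq:lp1}-\eqref{eq:lp4}, and then to prove that sub-lemma by a coordinate-descent argument driven by the preferred action property.

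By Theorem \ref{thm:optimality}, it suffices to show that restricting the LP \eqref{eq:lp1}-\eqref{eq:lp4} to non-decreasing pure strategies preserves the optimal value. Strong LP duality expresses the unrestricted optimum as $\sup_{\lambda \geq 0}\bigl[\min_m\bigl(r_0^{(m)} + \sum_{k=1}^K \lambda_k r_k^{(m)}\bigr) - \sum_{k=1}^K \lambda_k c_k\bigr]$, and the restricted optimum by the same expression with the inner minimum taken over non-decreasing $\bv{g}^{(m)}$ only. So it suffices to show that for every $\lambda \geq 0$ the inner minimum is attained by a non-decreasing pure strategy. Setting $\hat{P}_\lambda := \hat{p}_0 + \sum_{k=1}^K \lambda_k \hat{p}_k$, one has $r_0^{(m)} + \sum_k \lambda_k r_k^{(m)} = \mathbb{E}[\hat{P}_\lambda(\bv{g}^{(m)}(\bv{\omega}),\bv{\omega})]$, and by Lemma \ref{lem:preferred4} $\hat{P}_\lambda$ inherits the preferred action property. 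The task thus reduces to a sub-lemma: for any $\hat{P}$ with the preferred action property and any product measure $\pi(\bv{\omega}) = \prod_i q_i(\omega_i)$, $\min_{\bv{g}} \mathbb{E}[\hat{P}(\bv{g}(\bv{\omega}),\bv{\omega})]$ is attained by some $\bv{g}$ with every coordinate $g_i$ non-decreasing.

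To prove the sub-lemma, fix any minimizer $\bv{g}^*$ and iteratively update user $i$'s component, for $i = 1, \ldots, N$. With the other coordinates fixed, independence gives $\mathbb{E}[\hat{P}(\bv{g}(\bv{\omega}),\bv{\omega})] = \sum_{\omega_i} q_i(\omega_i)\,\psi_i(g_i(\omega_i),\omega_i)$, where $\psi_i(a,\omega_i) := \sum_{\bv{\omega}_{\overline{i}}} \prod_{j\neq i} q_j(\omega_j)\,\hat{P}([g_{\overline{i}}^*(\bv{\omega}_{\overline{i}}),a],[\bv{\omega}_{\overline{i}},\omega_i])$, so minimization reduces to choosing $g_i(\omega_i) \in \operatorname*{arg\,min}_a \psi_i(a,\omega_i)$ pointwise in $\omega_i$. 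Summing the preferred action inequality for $\hat{P}$ over $\bv{\omega}_{\overline{i}}$ with the nonnegative weights $\prod_{j\neq i} q_j(\omega_j)$ yields the decreasing-differences property $\psi_i(a,\omega) - \psi_i(b,\omega) \geq \psi_i(a,\gamma) - \psi_i(b,\gamma)$ whenever $a > b$ and $\omega < \gamma$. Topkis's monotone-selection theorem then produces a non-decreasing selection $\tilde{g}_i$ from $\operatorname*{arg\,min}_a \psi_i(a,\cdot)$. Since $g_i^*$ was a best response to $g_{\overline{i}}^*$, so is $\tilde{g}_i$, so replacing $g_i^*$ by $\tilde{g}_i$ preserves optimality. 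One sweep over $i$ produces a minimizer with every coordinate non-decreasing.

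The main obstacle is bundling the $K+1$ penalty functions into a single preferred-action function so that one selection $\tilde{g}_i$ works for all constraints simultaneously. Lemma \ref{lem:preferred4}, which says that nonnegative-weighted sums of preferred-action functions are preferred-action, is exactly what enables this, and is the reason LP duality is used to pass to the Lagrangian $\hat{P}_\lambda$ rather than arguing about each $\hat{p}_k$ individually. A secondary issue---feasibility of the restricted LP, required for strong duality on that side---is settled by running the same sub-lemma on the feasibility program $\min_\theta \max_k (r_k(\theta) - c_k)$, whose value is nonpositive by the assumed feasibility of the unrestricted problem.
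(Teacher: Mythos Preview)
Your proposal is correct, but it follows a genuinely different route from the paper's own proof.

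The paper argues directly on the primal LP \eqref{eq:lp1}--\eqref{eq:lp4} by an interchange. Given any feasible $(\theta_m)$ and any strategy $\bv{g}^{(m)}$ with $\theta_m>0$ whose $i$th component violates monotonicity at some pair $\omega<\gamma$, it replaces $\bv{g}^{(m)}$ by a specific randomization between two ``repaired'' strategies $\bv{g}^{(m),low}$ and $\bv{g}^{(m),high}$, with mixing probabilities $q_i(\gamma)/(q_i(\omega)+q_i(\gamma))$ and $q_i(\omega)/(q_i(\omega)+q_i(\gamma))$. The independence assumption plus the preferred action inequality then give $\expect{p_k^{new}(t)}\leq \expect{p_k^{old}(t)}$ for \emph{every} $k\in\{0,1,\ldots,K\}$ simultaneously. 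Finitely many such swaps yield a feasible, no-worse LP solution supported on non-decreasing strategies.

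Your approach instead passes to the dual: for each $\lambda\geq 0$ you collapse the $K+1$ penalties into the single Lagrangian $\hat{P}_\lambda$ via Lemma~\ref{lem:preferred4}, and then invoke Topkis-type monotone selection on the one-player best-response problem, sweeping over users. What this buys you is a clean connection to standard monotone comparative statics; what the paper's approach buys is that it is entirely elementary and self-contained (no duality, no external Topkis theorem), and it actually proves the stronger statement that the interchange weakly improves all $K+1$ expected penalties at once. That stronger statement is precisely why the paper never needs to worry about feasibility of the restricted LP or to bundle constraints through a Lagrangian---the issue you flag in your last paragraph simply does not arise there. Your handling of that issue (applying the sub-lemma to the feasibility program via minimax) is correct, but it is extra work that the paper's primal interchange sidesteps.
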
 

\begin{proof} 
The proof uses an interchange argument.  Fix $m \in \{1, \ldots, M\}$,  $i \in \{1, \ldots, N\}$, and fix two elements $\omega$ and $\gamma$ in $\Omega_i$  
that satisfy $\omega < \gamma$.   Suppose the linear program \eqref{eq:lp1}-\eqref{eq:lp4} places weight $\theta_m>0$
on a strategy function $\bv{g}^{(m)}(\bv{\omega})$ that satisfies $g_i^{(m)}(\omega) > g_i^{(m)}(\gamma)$ (so the non-decreasing requirement is violated). 
The goal is to show this can be replaced by new strategies that do not violate the non-decreasing requirement for elements $\omega$ and $\gamma$, without loss of optimality.  

Define $\alpha = g_i^{(m)}(\omega)$ and $\beta = g_i^{(m)}(\gamma)$. Then $\alpha > \beta$. 
Define two new functions: 
\begin{eqnarray*}
g_i^{(m), low}(\omega_i) &=& \left\{\begin{array}{ll}
g_i^{(m)}(\omega_i)  &\mbox{ if $\omega_i \notin \{\omega, \gamma\}$} \\
\beta  & \mbox{ if $\omega_i \in \{\omega, \gamma\}$} 
\end{array}
\right. \\
g_i^{(m), high}(\omega_i) &=& \left\{ \begin{array}{ll}
 g_i^{(m)}(\omega_i) &\mbox{ if $\omega_i \notin \{\omega, \gamma\}$} \\
\alpha  & \mbox{ if $\omega_i \in \{\omega, \gamma\}$}
\end{array}
\right. 
\end{eqnarray*}
Unlike the original function $g_i^{(m)}(\omega_i)$, these new functions satisfy: 
\begin{eqnarray*}
g_i^{(m), low}(\omega) &\leq& g_i^{(m), low}(\gamma)  \\
g_i^{(m), high}(\omega) &\leq& g_i^{(m), high}(\gamma)
\end{eqnarray*}
Define $\bv{g}^{(m), low}(\bv{\omega})$ and $\bv{g}^{(m), high}(\bv{\omega})$ by replacing the $i$th component function $g_i^{(m)}(\omega_i)$ of
$\bv{g}^{(m)}(\bv{\omega})$ with new component functions $g_i^{(m), low}(\omega_i)$ and $g_i^{(m), high}(\omega_i)$, respectively.  
Let $p_k^{old}(t)$ be the $k$th penalty incurred in the (old) strategy that uses $\bv{g}^{(m)}(\bv{\omega})$ with probability $\theta_m$. Let $p_k^{new}(t)$ be the corresponding penalty under a (new) 
strategy that, instead of using $\bv{g}^{(m)}(\bv{\omega})$ with probability $\theta_m$, uses: 
\begin{itemize} 
\item $\bv{g}^{(m), low}(\bv{\omega})$ with probability $\theta_mq_i(\gamma)/(q_i(\omega) + q_i(\gamma))$. 
\item $\bv{g}^{(m), high}(\bv{\omega})$ with probability $\theta_mq_i(\omega)/(q_i(\omega) + q_i(\gamma))$. 
\end{itemize} 

Let $\bv{\omega}_{\overline{i}}(t)$ denote the $(N-1)$-dimensional vector of components $\omega_j(t)$ for $j \neq i$. 
Fix any vector $\bv{\omega}_{\overline{i}} \in \Omega_{\overline{i}}$.  Define $\bv{\alpha}_{\overline{i}}$ as the corresponding $(N-1)$-dimensional vector of $g_j^{(m)}(\omega_j)$ values for $j \neq i$.   Then: 
\begin{itemize} 
\item If $\bv{\omega}_{\overline{i}}(t)=\bv{\omega}_{\overline{i}}$, $\omega_i(t)=\omega$, and $\bv{g}^{(m), low}(\bv{\omega})$ is used by the new strategy, then
$\bv{\omega}(t) = [\bv{\omega}_{\overline{i}}, \omega]$ and: 
\begin{eqnarray*}
p_k^{new}(t) &=& \hat{p}_k\left(\bv{g}^{(m), low}\left([\bv{\omega}_{\overline{i}}, \omega]\right), [\bv{\omega}_{\overline{i}}, \omega]\right) \\
&=& \hat{p}_k\left([\bv{\alpha}_{\overline{i}}, \beta] , [\bv{\omega}_{\overline{i}}, \omega]  \right) 
\end{eqnarray*}
Further, since the old strategy used $g_i^{(m)}(\omega) = \alpha$: 
\begin{eqnarray*}
p_k^{old}(t) &=& \hat{p}_k\left(\bv{g}^{(m)}\left([\bv{\omega}_{\overline{i}}, \omega]\right), [\bv{\omega}_{\overline{i}}, \omega]\right) \\
&=& \hat{p}_k\left([\bv{\alpha}_{\overline{i}}, \alpha] , [\bv{\omega}_{\overline{i}}, \omega]  \right) 
\end{eqnarray*}

\item If $\bv{\omega}_{\overline{i}}(t)=\bv{\omega}_{\overline{i}}$, $\omega_i(t)=\gamma$, and $\bv{g}^{(m), high}(\bv{\omega})$ is used by the new strategy, then
$\bv{\omega}(t) = [\bv{\omega}_{\overline{i}}, \gamma]$ and: 
\begin{eqnarray*}
p_k^{new}(t) &=& \hat{p}_k\left(\bv{g}^{(m), high}\left([\bv{\omega}_{\overline{i}}, \gamma]\right), [\bv{\omega}_{\overline{i}}, \gamma]\right) \\
&=& \hat{p}_k\left([\bv{\alpha}_{\overline{i}}, \alpha] , [\bv{\omega}_{\overline{i}}, \gamma]  \right) 
\end{eqnarray*}
Further, since the old strategy used $g_i^{(m)}(\gamma) = \beta$: 
\begin{eqnarray*}
p_k^{old}(t) &=& \hat{p}_k\left(\bv{g}^{(m)}\left([\bv{\omega}_{\overline{i}}, \omega]\right), [\bv{\omega}_{\overline{i}}, \omega]\right) \\
&=& \hat{p}_k\left([\bv{\alpha}_{\overline{i}}, \beta] , [\bv{\omega}_{\overline{i}}, \gamma]  \right) 
\end{eqnarray*}

\item Suppose $\bv{\omega}_{\overline{i}}(t)=\bv{\omega}_{\overline{i}}$, but 
neither of the above two events are satisfied on slot $t$.  That is, neither of the events $\script{E}_1$ or $\script{E}_2$ are true, where: 
\begin{eqnarray*}
\script{E}_1 &\defequiv& \{\omega_i(t)=\omega \} \cap \{\bv{g}^{(m), low}(\bv{\omega}) \mbox{ is used} \} \\
\script{E}_2 &\defequiv& \{\omega_i(t)=\gamma \} \cap \{\bv{g}^{(m), high}(\bv{\omega}) \mbox{ is used} \} 
\end{eqnarray*}
Then $p_k^{new}(t) - p_k^{old}(t)=0$.
\end{itemize} 
It follows that: 
\begin{eqnarray}
&&\hspace{-.4in}\expect{p_k^{new}(t) - p_k^{old}(t)|\bv{\omega}_{\overline{i}}(t) = \bv{\omega}_{\overline{i}}} \nonumber \\
&=&\theta_mq_i(\omega)\left(\frac{q_i(\gamma)}{q_i(\omega)+q_i(\gamma)}\right) \times \nonumber \\
&& \left[ \hat{p}_k\left([\bv{\alpha}_{\overline{i}}, \beta], [\bv{\omega}_{\overline{i}}, \omega]  \right)  - \hat{p}_k\left([\bv{\alpha}_{\overline{i}}, \alpha], [\bv{\omega}_{\overline{i}}, \omega]   \right)   \right] \nonumber \\
&&+\theta_mq_i(\gamma)\left(\frac{q_i(\omega)}{q_i(\omega)+q_i(\gamma)}\right) \times \nonumber \\
&&  \left[ \hat{p}_k\left([\bv{\alpha}_{\overline{i}}, \alpha], [\bv{\omega}_{\overline{i}}, \gamma]  \right)  - \hat{p}_k\left([\bv{\alpha}_{\overline{i}}, \beta], [\bv{\omega}_{\overline{i}}, \gamma]   \right)   \right]\label{eq:big}
\end{eqnarray}
where the above uses the fact that $\omega_i(t)$ is independent of $\bv{\omega}_{\overline{i}}(t)$, so conditioning on $\bv{\omega}_{\overline{i}}(t)=\bv{\omega}_{\overline{i}}$ does not change the distribution of $\omega_i(t)$.   Because $\hat{p}_k(\cdot)$ satisfies the preferred action property and $\alpha > \beta$, $\omega < \gamma$, one has: 
\begin{eqnarray*}
\left[ \hat{p}_k\left([\bv{\alpha}_{\overline{i}}, \alpha], [\bv{\omega}_{\overline{i}}, \omega]  \right)  - \hat{p}_k\left([\bv{\alpha}_{\overline{i}}, \beta], [\bv{\omega}_{\overline{i}}, \omega]   \right)   \right] \\
\geq 
\left[ \hat{p}_k\left([\bv{\alpha}_{\overline{i}}, \alpha], [\bv{\omega}_{\overline{i}}, \gamma]  \right)  - \hat{p}_k\left([\bv{\alpha}_{\overline{i}}, \beta], [\bv{\omega}_{\overline{i}}, \gamma]   \right)   \right]
\end{eqnarray*}
and hence 
 \eqref{eq:big} is less than or equal to zero. This holds when conditioning on all possible
values of $\bv{\omega}_{\overline{i}}(t)$, and so: 
\[ \expect{p_k^{new}(t) - p_k^{old}(t)} \leq 0 \]
This holds for all penalties $k \in \{0, 1, \ldots, K\}$, and so the modified algorithm still satisfies all constraints with an optimal value for $\expect{p_0(t)}$. The interchange can be repeated a finite number of times 
until all strategy functions are non-decreasing. 
\end{proof} 

In the special case of binary actions, so that $\script{A}_i = \{0,1\}$ for all $i\in \{1, \ldots, N\}$, all non-decreasing strategy functions $g_i(\omega_i)$ have the following form: 
\begin{equation} \label{eq:threshold-function} 
g_i(\omega_i)  = \left\{ \begin{array}{ll}
0 & \mbox{ if $\omega_i < h_i^*$} \\
1  & \mbox{ if $\omega_i \geq h_i^*$} 
\end{array}
\right.
\end{equation} 
for some threshold $h_i^* \in \{0, 1, \ldots, |\Omega_i|\}$. 
There are $|\Omega_i|+1$ such threshold functions, whereas the total number of strategy functions for user $i$ is 
$2^{|\Omega_i|}$.  Restricting to the threshold functions significantly decreases complexity. 

\section{Online optimization} \label{section:DPP}

This section presents a dynamic algorithm to solve the problem \eqref{eq:q1}-\eqref{eq:q3}. The algorithm can also be viewed as an 
online solution to the linear program \eqref{eq:lp1}-\eqref{eq:lp4}.  Let $\tilde{M}$ be the number of pure strategies required 
for consideration in the linear program (where $\tilde{M}$ is possibly smaller than $M$, as discussed in the previous section). 
Reorder the functions $g^{(m)}(\bv{\omega})$  if necessary so that  every slot $t$, the system chooses a strategy function
in the set $\{g^{(1)}(\bv{\omega}), \ldots, g^{(\tilde{M})}(\bv{\omega})\}$.

Suppose all users receive feedback specifying the values of the penalties $p_1(t), \ldots, p_K(t)$ at the end of slot $t+D$, where $D$ is a non-negative
integer that represents a system delay.  For
each constraint $k \in \{1, \ldots, K\}$, define a \emph{virtual queue} $Q_k(t)$ and initialize $Q_k(0)$ to a commonly known value (typically 0). For each $t \in \{0, 1, 2, \ldots\}$ the queue is updated
by: 
\begin{equation} \label{eq:q-update} 
Q_k(t+1) = \max[Q_k(t) + p_k(t-D) - c_k, 0] 
\end{equation}
Each user can iterate the above equation based on information available at the end of slot $t$.  Thus, all users know the value of $Q_k(t)$ at the beginning of each slot $t$.
If $D>0$, define $p_k(-1)=p_k(-2) = \cdots = p_k(-D)=0$.

\begin{lem} \label{lem:vq} Under any decision rule for choosing strategy functions over time,  for all $t>0$ one has: 
\[ \frac{1}{t}\sum_{\tau=0}^{t-1} \expect{p_k(\tau-D)} \leq c_k + \frac{\expect{Q_k(t)}}{t} - \frac{\expect{Q_k(0)}}{t} \]
\end{lem}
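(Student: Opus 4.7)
The plan is to exploit the standard observation that the max with zero in the virtual queue update only strengthens the inequality obtained by dropping it. Concretely, from the update rule \eqref{eq:q-update}, since $\max[x,0] \geq x$, we immediately get the sample-path bound
\[ Q_k(t+1) \geq Q_k(t) + p_k(t-D) - c_k, \]
which rearranges to $p_k(t-D) - c_k \leq Q_k(t+1) - Q_k(t)$.

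Next I would telescope. Summing the previous inequality over $\tau \in \{0, 1, \ldots, t-1\}$ gives
\[ \sum_{\tau=0}^{t-1}\bigl(p_k(\tau - D) - c_k\bigr) \leq Q_k(t) - Q_k(0), \]
since the right-hand side telescopes. Taking expectations of both sides (using linearity), then dividing through by $t$ and moving $c_k$ to the right side, yields exactly
\[ \frac{1}{t}\sum_{\tau=0}^{t-1}\expect{p_k(\tau-D)} \leq c_k + \frac{\expect{Q_k(t)}}{t} - \frac{\expect{Q_k(0)}}{t}, \]
which is the claim.

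There is no real obstacle here: the argument is purely algebraic and holds path-by-path before expectations are taken, so it applies to any decision rule whatsoever (the rule only affects the distribution of the $p_k(\tau-D)$ sequence, not the validity of the queue inequality). The only minor point to note is that the convention $p_k(-1) = \cdots = p_k(-D) = 0$ ensures the sum on the left is well-defined for the initial $D$ slots, and this is already built into the statement.
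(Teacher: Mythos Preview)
Your proof is correct and essentially identical to the paper's own argument: drop the $\max[\cdot,0]$ to get the one-step inequality, telescope over $\tau\in\{0,\ldots,t-1\}$, and rearrange (the paper leaves the final expectation step implicit, but otherwise the steps match line for line).
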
 

\begin{proof} 
From \eqref{eq:q-update} the following holds for all slots $\tau \in \{0, 1, 2, \ldots\}$: 
\[ Q_k(\tau+1) \geq Q_k(\tau) + p_k(\tau-D) - c_k \]
Thus: 
\[ Q_k(\tau+1) - Q_k(\tau) \geq p_k(\tau-D) - c_k \]
Summing over $\tau \in \{0, 1, \ldots, t-1\}$ for $t>0$ gives: 
\[ Q_k(t) - Q_k(0) \geq \sum_{\tau=0}^{t-1} p_k(\tau-D) - c_kt \]
Rearranging terms proves the result. 
\end{proof}

Lemma \ref{lem:vq} ensures the constraints \eqref{eq:q2} are satisfied 
whenever the condition $\lim_{t\rightarrow 0} \expect{Q_k(t)}/t = 0$ holds for all $k \in \{1, \ldots, K\}$, a condition called
\emph{mean rate stability} \cite{sno-text}.  

\subsection{Lyapunov optimization} 

Define $\bv{Q}(t) = (Q_1(t), \ldots, Q_K(t))$. Define $L(t)$ as the squared norm of $\bv{Q}(t)$ (divided by 2 for convenience later): 
\[ L(t) \defequiv \frac{1}{2}\norm{\bv{Q}(t)}^2 = \frac{1}{2}\sum_{k=1}^KQ_k(t)^2 \]
Define  $\Delta(t) \defequiv L(t+1) - L(t)$, called the \emph{Lyapunov drift}. 
Consider the following structure for the control decisions:  Every slot $t$ the queues $\bv{Q}(t)$ are 
observed.  Then a collection of non-negative values $\beta_m(t)$ are created that satisfy $\sum_{m=1}^{\tilde{M}} \beta_m(t)=1$ (if desired, the $\beta_m(t)$ values can 
be chosen as a function of the $\bv{Q}(t)$ values). Then an index $m \in \{1, \ldots, \tilde{M}\}$ is randomly and independently chosen according to the probability mass function $\beta_m(t)$, and the decision rule $\bv{g}^{(m)}(\bv{\omega}(t))$ is used for slot $t$.  Thus, a specific algorithm with this structure is determined by specifying how the 
 $\beta_m(t)$ probabilities are chosen on each slot $t$. 
 
 Motivated by the theory in \cite{sno-text}, the approach is to choose probabilities every slot to greedily minimize a bound
on the \emph{drift-plus-penalty expression} $\expect{\Delta(t+D) + Vp_0(t)|\bv{Q}(t)}$, where $V$ is a non-negative weight that affects a performance tradeoff. 
The $D$-shifted drift term $\Delta(t+D)$ is different from \cite{sno-text} and is used because of the delayed feedback structure of the queue update \eqref{eq:q-update}.
The intuition is that minimizing $\Delta(t+D)$ maintains queue stability, while adding the weighted penalty term $Vp_0(t)$ biases decisions in favor of lower penalties. 
The following lemma provides a bound on the drift-plus-penalty expression under any $\beta_m(t)$ probabilities. 

\begin{lem} \label{lem:app}  Fix $V\geq 0$. Under the above decision structure, one has for slot $t$: 
\begin{eqnarray}
 \expect{\Delta(t+D) + Vp_0(t)|\bv{Q}(t)} \leq B(1+2D) \nonumber \\
 V\sum_{m=1}^{\tilde{M}}\beta_m(t) r_0^{(m)}  
 + \sum_{k=1}^KQ_k(t)\left[\sum_{m=1}^{\tilde{M}} \beta_m(t) r_k^{(m)}-c_k\right]  \label{eq:dpp} 
 \end{eqnarray}
 where $r_k^{(m)}$ is the $k$th component of $\bv{r}^{(m)}$ as defined in \eqref{eq:rm}, and the constant $B$ is defined: 
 \begin{eqnarray*}
 B \defequiv \max_{m \in \{1, \ldots, \tilde{M}\}} \frac{1}{2}\sum_{k=1}^K\sum_{\bv{\omega}\in\Omega}\pi(\bv{\omega}) \left|\hat{p}_k\left(\bv{g}^{(m)}(\bv{\omega}), \bv{\omega}\right) -c_k\right|^2
 \end{eqnarray*}
\end{lem}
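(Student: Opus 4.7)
The plan is to follow the standard drift-plus-penalty argument from Lyapunov optimization, adapted to handle the $D$-slot feedback delay present in \eqref{eq:q-update}. First, from the queue update one obtains, for any slot $\tau$, the per-slot drift bound
\[ L(\tau+1) - L(\tau) \leq \tfrac{1}{2}\sum_{k=1}^K (p_k(\tau-D) - c_k)^2 + \sum_{k=1}^K Q_k(\tau)(p_k(\tau-D) - c_k), \]
derived by applying $\max[x,0]^2 \leq x^2$ after squaring the recursion and expanding. Applying this inequality at $\tau = t+D$ expresses $\Delta(t+D)$ in terms of $Q_k(t+D)$ and $p_k(t) - c_k$.

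The technical obstacle is that the lemma requires the bound to be stated using $Q_k(t)$ rather than $Q_k(t+D)$. To bridge the gap, I would use the one-step Lipschitz property $|Q_k(\tau+1) - Q_k(\tau)| \leq |p_k(\tau - D) - c_k|$ of the queue recursion (an easy case analysis on the two branches of the $\max$), which telescopes to
\[ |Q_k(t+D) - Q_k(t)| \leq \sum_{s=t-D}^{t-1} |p_k(s) - c_k|. \]
Substituting this into the cross term $Q_k(t+D)(p_k(t)-c_k)$ and applying $|ab| \leq \tfrac{1}{2}(a^2+b^2)$ to each product $|p_k(t)-c_k|\cdot|p_k(s)-c_k|$ replaces $Q_k(t+D)$ by $Q_k(t)$ at the cost of $D$ additional copies of $\tfrac{1}{2}(p_k(t)-c_k)^2$ plus one copy of $\tfrac{1}{2}(p_k(s)-c_k)^2$ for each $s \in \{t-D, \ldots, t-1\}$.

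Then I would take conditional expectation given $\bv{Q}(t)$. The key observation is that on any slot $\tau$ the strategy used is some mixture $\beta_m(\tau)$ over the $\bv{g}^{(m)}$, and by definition of $B$, $\tfrac{1}{2}\expect{\sum_k (p_k(\tau) - c_k)^2 \mid \text{strategy } m\text{ used}} \leq B$ for every $m$. Hence by iterated expectation $\tfrac{1}{2}\expect{\sum_k (p_k(\tau)-c_k)^2 \mid \bv{Q}(t)} \leq B$ for every slot $\tau$, whether $\tau = t$ or $\tau$ is one of the preceding $D$ slots. The $(1+D)$ copies of $\tfrac{1}{2}\sum_k(p_k(t)-c_k)^2$ contribute $(1+D)B$, and the $D$ past slots contribute another $DB$, giving the aggregate constant $B(1+2D)$. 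For the linear cross term, $\expect{p_k(t)-c_k \mid \bv{Q}(t)} = \sum_m \beta_m(t) r_k^{(m)} - c_k$ by \eqref{eq:rm} together with the independence of $\bv{\omega}(t)$ from $X(t)$. Adding $V\expect{p_0(t) \mid \bv{Q}(t)} = V\sum_m \beta_m(t) r_0^{(m)}$ yields \eqref{eq:dpp}. The main subtlety throughout is ensuring that the delay terms collapse into a $t$-independent constant; this is precisely why one must bound $|Q_k(t+D) - Q_k(t)|$ by past penalty deviations rather than by a queue magnitude that would grow with $t$.
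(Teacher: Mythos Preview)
Your proposal is correct and follows essentially the same route as the paper: square the update \eqref{eq:q-update} at slot $t+D$, split $Q_k(t+D) = Q_k(t) + (Q_k(t+D)-Q_k(t))$, telescope the queue increments to bound $|Q_k(t+D)-Q_k(t)|$ by $\sum_{d}|p_k(t_d)-c_k|$, and then control the resulting cross terms. The only difference is cosmetic: where the paper applies Cauchy--Schwarz (and then the norm inequality) to obtain $\sum_k \expect{(Q_k(t+D)-Q_k(t))(p_k(t)-c_k)\mid \bv{Q}(t)} \leq 2BD$, you instead use the elementary bound $|ab| \leq \tfrac{1}{2}(a^2+b^2)$ termwise, arriving at the identical constant $B(1+2D)$.
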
 

\begin{proof} 
Note that for all $k \in \{0, 1, \ldots, K\}$: 
\begin{eqnarray*}
\expect{p_k(t)|\bv{Q}(t)} &=& \expect{\hat{p}_k(\bv{\alpha}(t), \bv{\omega}(t))|\bv{Q}(t)} \\
&=& \sum_{m=1}^{\tilde{M}} \sum_{\bv{\omega}\in\Omega} \beta_m(t)\pi(\bv{\omega})\hat{p}_k\left(\bv{g}^{(m)}(\bv{\omega}), \bv{\omega}\right) \\
&=& \sum_{m=1}^{\tilde{M}} \beta_m(t) r_k^{(m)} 
\end{eqnarray*}
Therefore, to prove \eqref{eq:dpp} it suffices to prove: 
\begin{eqnarray}
 \expect{\Delta(t+D) |\bv{Q}(t)} \leq B(1+2D)  \nonumber \\
 + \sum_{k=1}^KQ_k(t)\expect{p_k(t) - c_k|\bv{Q}(t)}  \label{eq:suffices} 
 \end{eqnarray}
 To this end, squaring the queue equation \eqref{eq:q-update}, using $\max[a,0]^2 \leq a^2$, and evaluating at time $t+D$ yields: 
 \begin{eqnarray*}
 Q_k(t+D+1)^2 &\leq& Q_k(t+D)^2 + (p_k(t) - c_k)^2  \nonumber \\
 && + 2Q_k(t+D)(p_k(t)-c_k) 
 \end{eqnarray*}
 Summing over $k \in \{1, \ldots, K\}$ and dividing by $2$ gives: 
 \begin{eqnarray*}
 \Delta(t+D) &\leq& \frac{1}{2}\sum_{k=1}^K(p_k(t)-c_k)^2 \nonumber \\
 && + \sum_{k=1}^KQ_k(t+D)(p_k(t)-c_k)  \\
 &=& \frac{1}{2}\sum_{k=1}^K(p_k(t)-c_k)^2 \nonumber \\
 && + \sum_{k=1}^KQ_k(t)(p_k(t)-c_k)  \\
 && + \sum_{k=1}^K(Q_k(t+D)-Q_k(t))(p_k(t)-c_k)
 \end{eqnarray*}
 Taking conditional expectations of the above proves \eqref{eq:suffices}  upon application of the following inequalities (see Appendix E): 
 \begin{eqnarray*}
 \frac{1}{2}\sum_{k=1}^K\expect{(p_k(t)-c_k)^2|\bv{Q}(t)} \leq B \\
 \sum_{k=1}^K\expect{ (Q_k(t+D)-Q_k(t))(p_k(t)-c_k) | \bv{Q}(t)} \leq 2BD \\
 \end{eqnarray*}
\end{proof} 

\subsection{The drift-plus-penalty algorithm} \label{section:dpp} 

Observe that the probability mass function $\beta_m(t)$ that minimizes the right-hand-side of   \eqref{eq:dpp} is the one that, with probability 1, 
chooses the 
index $m \in \{1, \ldots, \tilde{M}\}$ that minimizes the expression (breaking ties arbitrarily):  
\begin{equation} \label{eq:expression} 
 Vr_0^{(m)} + \sum_{k=1}^KQ_k(t)r_k^{(m)} 
 \end{equation} 
 This gives rise to the following \emph{drift-plus-penalty algorithm}: Every slot $t$: 
 \begin{itemize}
 \item Users observe the queue vector $\bv{Q}(t)$.  
 \item Users select the pure decision strategy $\bv{g}^{(m)}(\bv{\omega})$, where $m$ is the index that minimizes the expression 
 \eqref{eq:expression}. 
 \item The delayed penalty information $p_k(t-D)$ is observed and queues are updated via \eqref{eq:q-update}. 
 \end{itemize}
 
 \subsection{Performance Analysis} 
 
 \begin{thm} \label{thm:performance}  If the problem \eqref{eq:q1}-\eqref{eq:q3} is feasible, then under the drift-plus-penalty algorithm for any $V\geq 0$: 
 \begin{itemize} 
 \item All desired constraints \eqref{eq:q2}-\eqref{eq:q3} are satisfied. 
 \item For all $t>0$, the time average expectation of $p_0(t)$ satisfies: 
 \begin{equation} \label{eq:perf1}
 \frac{1}{t}\sum_{\tau=0}^{t-1}\expect{p_0(\tau)} \leq p_0^{opt} + \frac{B(1+2D)}{V} + \frac{\expect{L(D)}}{Vt}   
 \end{equation} 
 \item For all $t>0$, the time average expectation of $p_k(t)$ satisfies the following for all $k \in \{1, \ldots, K\}$: 
 \begin{equation} \label{eq:bound} 
  \frac{1}{t}\sum_{\tau=0}^{t-1} \expect{p_k(\tau)} \leq c_k + O(\sqrt{V/t}) 
  \end{equation} 
 \end{itemize} 
 \end{thm}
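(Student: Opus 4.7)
The plan is to chain together Lemma~\ref{lem:app}, the optimality of the randomized $\theta_m$-mixture from Theorem~\ref{thm:optimality}, and a telescoping argument to get both the utility bound \eqref{eq:perf1} and a growth rate for $\expect{L(t)}$ that implies feasibility and the $O(\sqrt{V/t})$ slack.

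First I would exploit the minimization structure of the algorithm. Since the drift-plus-penalty algorithm picks, with probability 1, the index $m$ that minimizes \eqref{eq:expression}, it certainly drives the right-hand-side of \eqref{eq:dpp} no higher than what any other choice of $\beta_m(t)$ achieves. In particular, plugging in the constant randomized policy $\beta_m(t) = \theta_m^*$, where $(\theta_1^*,\ldots,\theta_{\tilde M}^*)$ is the optimal solution of the linear program \eqref{eq:lp1}-\eqref{eq:lp4} from Theorem~\ref{thm:optimality}, gives $\sum_m \theta_m^* r_0^{(m)} = p_0^{opt}$ and $\sum_m \theta_m^* r_k^{(m)} \le c_k$ for $k\ge 1$. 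Combined with $Q_k(t) \ge 0$, this yields
\begin{equation*}
\expect{\Delta(t+D) + Vp_0(t)\mid \bv{Q}(t)} \le B(1+2D) + V p_0^{opt}.
\end{equation*}

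Next I would take full expectations and telescope. Using $\expect{\Delta(t+D)} = \expect{L(t+D+1)}-\expect{L(t+D)}$ and summing $\tau = 0,\ldots,t-1$ gives
\begin{equation*}
\expect{L(t+D)} - \expect{L(D)} + V\sum_{\tau=0}^{t-1}\expect{p_0(\tau)} \le t\bigl[B(1+2D) + V p_0^{opt}\bigr].
\end{equation*}
Dropping the non-negative $\expect{L(t+D)}$ and dividing by $Vt$ proves \eqref{eq:perf1}. Keeping $\expect{L(t+D)}$ instead, and replacing $p_0^{opt}$ with an absolute lower bound $\underline{p}_0$ (which exists because $p_0(t)$ is bounded), gives $\expect{L(t+D)} \le \expect{L(D)} + t\bigl[B(1+2D) + V(p_0^{opt}-\underline{p}_0)\bigr]$. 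Hence $\expect{L(t)} = O(Vt)$, and since $Q_k(t)^2 \le 2L(t)$, Jensen's inequality yields $\expect{Q_k(t)} \le \sqrt{2\expect{L(t)}} = O(\sqrt{Vt})$. Dividing by $t$ gives $\expect{Q_k(t)}/t = O(\sqrt{V/t})$, which is mean rate stability.

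To finish the constraint parts I would combine this with Lemma~\ref{lem:vq}. That lemma gives $\frac{1}{t}\sum_{\tau=0}^{t-1}\expect{p_k(\tau-D)} \le c_k + \expect{Q_k(t)}/t$. Since $p_k(\tau)$ is uniformly bounded, the time averages of $p_k(\tau)$ and $p_k(\tau-D)$ differ by only $O(D/t)$ boundary terms, so shifting indices gives $\frac{1}{t}\sum_{\tau=0}^{t-1}\expect{p_k(\tau)} \le c_k + O(\sqrt{V/t}) + O(D/t)$, which is \eqref{eq:bound} and implies $\limsup_{t\to\infty}\overline{p}_k(t)\le c_k$. Distributedness \eqref{eq:q3} is automatic because each slot's action is of the form $g^{(m)}_i(\omega_i(t))$ with the common index $m$ determined from the publicly known $\bv{Q}(t)$ (selecting the minimizer of \eqref{eq:expression} with a deterministic tie-breaking rule makes $m$ itself a deterministic function of the common history, so it qualifies as a valid $X(t)$).

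The main obstacle I expect is bookkeeping around the $D$-slot delay: Lemma~\ref{lem:app} is stated in terms of $\Delta(t+D)$ rather than $\Delta(t)$, so the telescoping sum picks up $\expect{L(D)}$ as the initial term (giving the $\expect{L(D)}/(Vt)$ in \eqref{eq:perf1} instead of the more familiar $L(0)/(Vt)$), and the constraint-side argument needs the index shift between $p_k(\tau-D)$ and $p_k(\tau)$ handled uniformly. Apart from that, the steps are standard Lyapunov drift-plus-penalty manipulations.
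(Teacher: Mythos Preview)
Your proposal is correct and follows the same route as the paper: compare the algorithm's choice against the optimal $\theta_m^*$ in the bound from Lemma~\ref{lem:app}, telescope the drift, drop $\expect{L(t+D)}\ge 0$ for \eqref{eq:perf1}, and keep it (together with a uniform bound on $p_0^{opt}-\expect{p_0(\tau)}$) to get $\expect{L(t+D)}=O(Vt)$ and hence $\expect{Q_k(t)}/t=O(\sqrt{V/t})$ via Jensen and Lemma~\ref{lem:vq}. Your explicit handling of the $D$-slot index shift between $p_k(\tau-D)$ and $p_k(\tau)$, and of the distributedness constraint \eqref{eq:q3}, is in fact more careful than the paper, which absorbs both points into a single sentence.
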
 
 
 The above theorem shows the time average expectation of $p_0(t)$ is within $O(1/V)$ of optimality.  It 
 can be pushed as close to optimal as desired
 by increasing the $V$ parameter.  The tradeoff is in the amount of time required for the time average expected penalties to be
 close to their desired constraints.  It can be shown that if $D=0$ and a mild \emph{Slater condition} is satisfied,  then the bound 
 \eqref{eq:bound} can be improved to (see Appendix D): 
  \begin{equation}
  \frac{1}{t}\sum_{\tau=0}^{t-1} \expect{p_k(\tau)} \leq c_k + O(V/t) + O(\log(t)/t) \label{eq:slater} 
  \end{equation} 
  
  \begin{proof} (Theorem \ref{thm:performance}) 
  Every slot $\tau \in \{0, 1, 2, \ldots\}$ the drift-plus-penalty algorithm chooses probabilities $\beta_m(\tau)$ that minimize the right-hand-side of the expression
  \eqref{eq:dpp}.  Thus: 
  \begin{eqnarray*}
 \expect{\Delta(\tau+D) + Vp_0(\tau)|\bv{Q}(\tau)} \leq B(1+2D) \nonumber \\
 V\sum_{m=1}^{\tilde{M}}\theta_m r_0^{(m)}  
 + \sum_{k=1}^KQ_k(\tau)\left[\sum_{m=1}^{\tilde{M}} \theta_m r_k^{(m)}-c_k\right]  
 \end{eqnarray*}
 where $\theta_m$ is any alternative probability mass function defined over $m \in \{1, \ldots, \tilde{M}\}$.  Using the probabilities $\theta_m$ that optimally 
 solve the linear program \eqref{eq:lp1}-\eqref{eq:lp4} gives: 
  \begin{eqnarray*}
 \expect{\Delta(\tau+D) + Vp_0(\tau)|\bv{Q}(\tau)} \leq B(1+2D) +  Vp_0^{opt}   
 \end{eqnarray*}
 Taking expectations of both sides and using iterated expectations gives: 
 \[ \expect{\Delta(\tau+D)} + V\expect{p_0(\tau)} \leq B(1+2D) +  Vp_0^{opt}    \]
 Summing over $\tau \in \{0, 1, \ldots, t-1\}$ gives: 
 \begin{eqnarray}
 \expect{L(t+D)} - \expect{L(D)} +V\sum_{\tau=0}^{t-1} \expect{p_0(\tau)} \leq \nonumber \\
 B(1+2D)t + Vp_0^{opt}t  \label{eq:foo} 
 \end{eqnarray}
 Using the fact that $\expect{L(t+D)} \geq 0$ and rearranging terms proves \eqref{eq:perf1}. 
 
 Again rearranging \eqref{eq:foo} yields: 
 \begin{eqnarray}
 \expect{L(t+D)} \leq (C + FV)t \label{eq:baz} 
 \end{eqnarray}
 where $C$ is defined: 
 \[ C \defequiv \expect{L(D)} + B(1+2D) \]
 and $F$ is defined as a constant that satisfies the following for all slots $\tau$: 
 \[ F \geq p_0^{opt} - \expect{p_0(\tau)} \]
 Such a constant exists because $p_0(\tau)$ has a finite number of possible outcomes. 
 Using the definition of $L(t+D)$ in \eqref{eq:baz}  gives: 
 \[ \expect{\norm{\bv{Q}(t+D)}^2} \leq 2(C+FV)t \]
By Jensen's inequality: 
\[ \expect{\norm{\bv{Q}(t+D)}}^2 \leq 2(C+FV)t \]
Thus: 
\[ \frac{\expect{\norm{\bv{Q}(t+D)}}}{t} \leq \sqrt{\frac{2(C+FV)}{t}} \]
Using this with Lemma \ref{lem:vq} proves \eqref{eq:bound}.  The inequality \eqref{eq:bound} immediately implies that all
desired constraints are satisfied. 
  \end{proof}

\subsection{The approximate drift-plus-penalty algorithm} \label{section:approximate} 

The algorithm of Section \ref{section:dpp} assumes perfect knowledge of the $r_k^{(m)}$ values.  These can be computed by \eqref{eq:rm} if the event 
probabilities $\pi(\bv{\omega})$ are known.  Suppose these probabilities are unknown, but delayed samples $\bv{\omega}(t-D)$ 
are available at the end of each slot $t$.   Let $W$ be a positive integer that represents a \emph{sample size}.  The $r_k^{(m)}$ values can 
be approximated by: 
\[ \tilde{r}_k^{(m)}(t) = \frac{1}{W} \sum_{w=0}^{W-1} \hat{p}_k\left(\bv{g}^{(m)}(\bv{\omega}(t-D-w)), \bv{\omega}(t-D-w)\right)\]
The approximate algorithm uses $\tilde{r}_k^{(m)}(t)$ values in replace of $r_k^{(m)}$ in the expression \eqref{eq:expression}.  Analysis in  \cite{neely-mwl-tac} shows that the performance gap between exact and approximate drift-plus-penalty implementations is 
$O(1/\sqrt{W})$, so that the approximate algorithm is very close to the exact algorithm when $W$ is large.  


\subsection{Separable penalty functions} 

A simpler and exact implementation is possible, without requiring knowledge of the probability distribution for $\bv{\omega}(t)$, 
when penalty functions have the following separable form for all $k \in \{0, 1, \ldots, K\}$: 
\begin{equation} \label{eq:separable} 
\hat{p}_k(\bv{\alpha}, \bv{\omega}) = \sum_{i=1}^N\hat{p}_{ik}(\alpha_i, \omega_i) 
\end{equation} 
where $\hat{p}_{ik}(\alpha_i, \omega_i)$ are any functions of $(\alpha_i, \omega_i) \in \script{A}_i \times \Omega_i$. 
Choosing an $m \in \{1, \ldots, \tilde{M}\}$ that minimizes the expression \eqref{eq:expression} is equivalent to observing the queues $\bv{Q}(t)$ and then 
 choosing a strategy function $\bv{g}(\bv{\omega})= (g_1(\omega_1), \ldots, g_N(\omega_N))$ to minimize: 
\begin{equation*} 
\sum_{\bv{\omega} \in \Omega}\pi(\bv{\omega}) \left[V\hat{p}_0(\bv{g}(\bv{\omega}), \bv{\omega}) + \sum_{k=1}^KQ_k(t)\hat{p}_k(\bv{g}(\bv{\omega}), \bv{\omega})\right] 
\end{equation*} 
With the structure \eqref{eq:separable}, this expression becomes: 
\[ \sum_{\bv{\omega} \in \Omega}\sum_{i=1}^K \pi(\bv{\omega})\left[V\hat{p}_{i0}(g_i(\omega_i), \omega_i) + \sum_{k=1}^KQ_k(t)\hat{p}_{ik}(g_i(\omega_i), \omega_i)\right]  \]
The above is minimized by the following for each $i \in \{1, \ldots, N\}$: 
\[ g_i(\omega_i) = \arg \min_{\alpha_i \in \script{A}_i} \left[V\hat{p}_{i0}(\alpha_i, \omega_i) + \sum_{k=1}^KQ_k(t) \hat{p}_{ik}(\alpha_i, \omega_i)\right] \]
Thus, the minimization step in the drift-plus-penalty algorithm reduces to having each user observe its own $\omega_i(t)$ value and then setting 
$\alpha_i(t) = g_i(\omega_i(t))$, where the function $g_i(\omega_i)$ is defined above.  The queue update \eqref{eq:q-update} is the same as before. 

In the special case $D=0$, this is the same algorithm as the optimal (centralized) drift-plus-penalty algorithm of \cite{sno-text}.  Hence, for separable problems, there is no optimality gap between centralized and distributed algorithms.

\section{Simulations} 

\subsection{Ergodic performance for a 2 user system} 

This subsection presents simulation results for the 2 user sensor network example of Section \ref{section:sensor-network}.
The approximate drift-plus-penalty algorithm of Section \ref{section:approximate} is used with a delay of $D=10$ slots and a moving average window size of $W=40$ slots.  The algorithm is not aware of the system probabilities.  The objective of this simulation is to find how close the achieved utility is to the optimal value $u^{opt} = 23/48 \approx 0.47917$ computed in Section \ref{section:correlated-reports}.   
Recall that the desired power constraints are $\overline{p}_i \leq 1/3$ for each user $i \in \{1, 2\}$. 
The table in Fig. \ref{fig:table} presents performance for various values of $V$.  For $V\geq 50$ the achieved utility differs from optimality only in the fourth decimal place. 

\begin{figure} [cht]
\centering
\begin{tabular}{|c | c | c | c |}
\hline
 $V$&$\overline{u}$&$\overline{p}_1$&$\overline{p}_2$ \\ \hline 
 1 & 0.344639 & 0.259764  &  0.219525 \\ \hline
  5 & 0.454557 &0.333158  & 0.267161 \\ \hline
 10 & 0.472763 & 0.333335 & 0.300415 \\ \hline
 25 & 0.478186 & 0.333346 & 0.326948 \\ \hline
 50 & 0.479032  & 0.333369  & 0.332873 \\ \hline
 100 & 0.479218 & 0.333406  & 0.333334 \\ \hline
 \end{tabular}
 \caption{Algorithm performance over $t=10^6$ slots ($D=10$, $W=40$). Recall that $u^{opt} = 23/48 \approx 0.47917$.}
 \label{fig:table}
\end{figure}

\subsection{Ergodic performance for a 3 user system}

Consider a network of 3 sensors that communicate reports to a fusion center, similar to the example considered in 
Section \ref{section:sensor-network}. The event processes $\omega_i(t)$ for each 
sensor $i \in \{1, 2, 3\}$ take values in the same 10 element set $\Omega$: 
\[ \Omega \defequiv \{0, 1, 2, 3,  \ldots, 9\} \] 
Consider binary actions $\alpha_i(t) \in \{0, 1\}$, where $\alpha_i(t)=1$ corresponds to sensor $i$ sending a report, and incurs a power cost of $1$ for that sensor.  The penalty and utility functions are: 
\begin{eqnarray*}
\hat{p}_i(\alpha_i, \omega_i) &=&  \alpha_i \: \: \: \: \forall i \in \{1, 2, 3\} \\
\hat{u}(\bv{\alpha}, \bv{\omega}) &=& \min\left[\frac{\alpha_1\omega_1}{10} + \frac{\alpha_2\omega_2 + \alpha_3\omega_3}{20}, 1\right] 
\end{eqnarray*}
Thus, sensor 1 brings more utility than the other sensors. 

Assume $\omega_1(t), \omega_2(t), \omega_3(t)$ are mutually independent and uniformly distributed over $\Omega$.  
The requirements for 
Theorem \ref{thm:non-decreasing} hold, and so one can restrict attention to the $11$ threshold functions $g_i(\omega_i)$ 
of the type \eqref{eq:threshold-function}.  As it does not make sense to report when $\omega_i(t)=0$, the functions $g_i(\omega)=1$ for all $\omega$  can be removed.  This leaves only 10 threshold functions at each user, for a total of $10^3=1000$ strategy functions $\bv{g}^{(m)}(\bv{\omega})$ to be considered every slot.  The approximate drift-plus-penalty algorithm of Section \ref{section:approximate} 
is simulated over $t=10^6$ slots with a delay $D=10$ and for various choices of the moving average window size $W$ and the parameter $V$.  All average power constraints were met for all choices of $V$ and $W$.  The achieved utility is shown in Fig. \ref{fig:achieved-utility}.  The utility increases to a limiting value as $V$ is increased.  This limiting value can be improved by adjusting the number of samples $W$ used in the moving average.  Increasing $W$ from $40$ to $200$ gives a small
improvement in performance. There is only a negligible improvement when $W$ is further increased to $400$ (the curves for $W=200$ and $W=400$ look identical).  

\begin{figure}[htbp]
   \centering
   \includegraphics[width=3.5in]{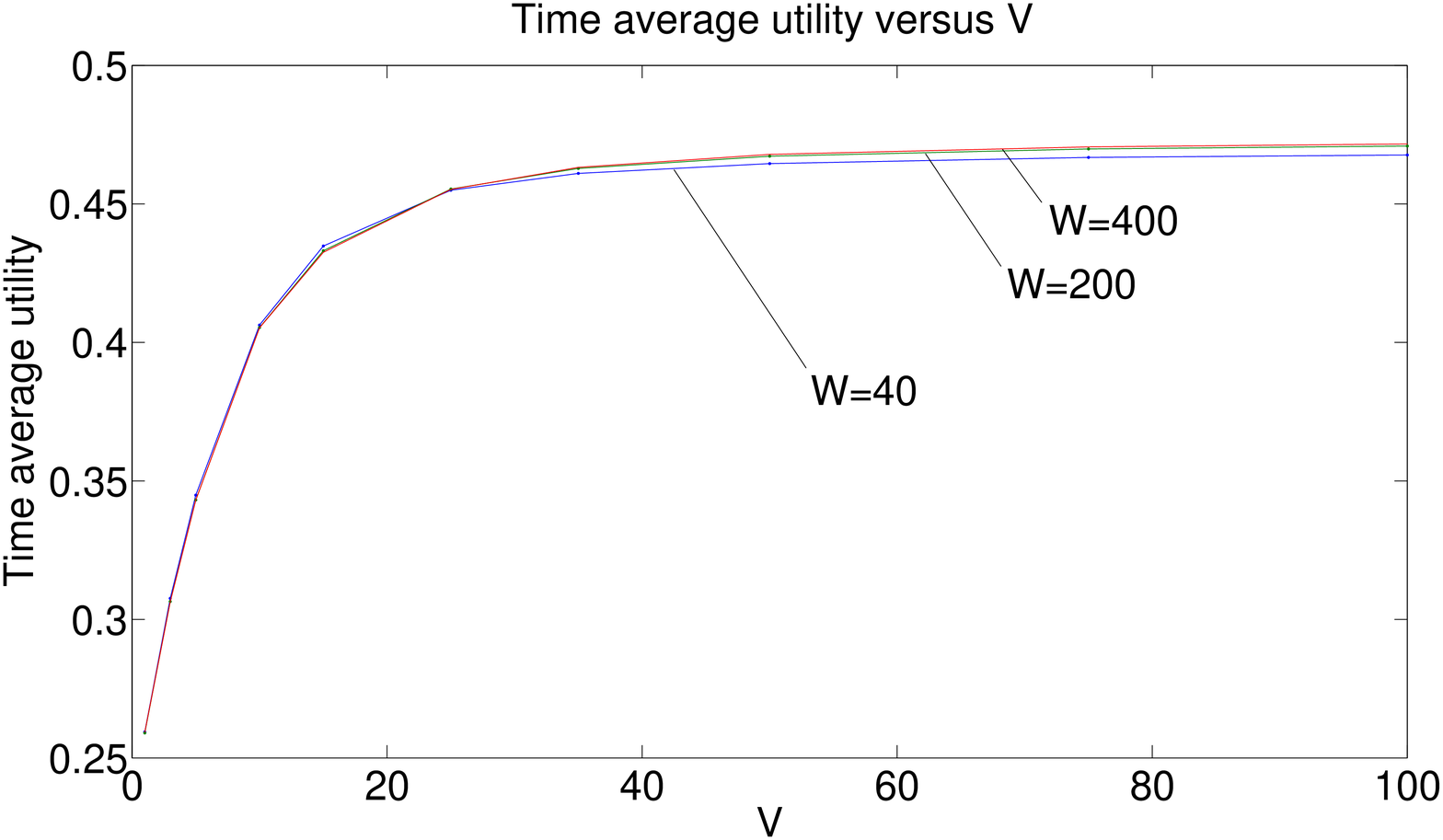} 
   \caption{Achieved utility $\overline{u}$ versus $V$ for various choices of $W$.}
   \label{fig:achieved-utility}
\end{figure}

Fig. \ref{fig:samplepath} demonstrates how the $V$ parameter affects the rate of convergence to the desired constraints. 
The window size is fixed to $W=40$ and the value $\max[\overline{p}_1(t), \overline{p}_2(t), \overline{p}_3(t)]$ is plotted 
for $t \in \{0, 1, \ldots, 2000\}$ (where $\overline{p}_i(t)$ is the empirical average power expenditure of user $i$ up to slot $t$). 
This value approaches the desired constraint of $1/3$ more slowly when $V$ is large.  The following table presents time averages after a longer duration of $10^6$ slots. 

\begin{figure} [htbp]
\centering
\begin{tabular}{|c | c | c | c |c|}
\hline
 $V$&$\overline{u}$&$\overline{p}_1$&$\overline{p}_2$ & $\overline{p}_3$ \\ \hline 
 1 &  0.259400 & 0.258000 & 0.251310 & 0.251342  \\ \hline
  10 & 0.406263   & 0.333301 & 0.316371 & 0.316418 \\ \hline
 50 & 0.464545  & 0.333357  & 0.333341  & 0.333342 \\ \hline
 100 &  0.467642 & 0.333387 & 0.333354 & 0.333354 \\ \hline
  \end{tabular}
 \caption{Time averages after $t=10^6$ slots ($W=40$).}
 \label{fig:table2}
\end{figure}

\begin{figure}[htbp]
   \centering
   \includegraphics[width=3.5in]{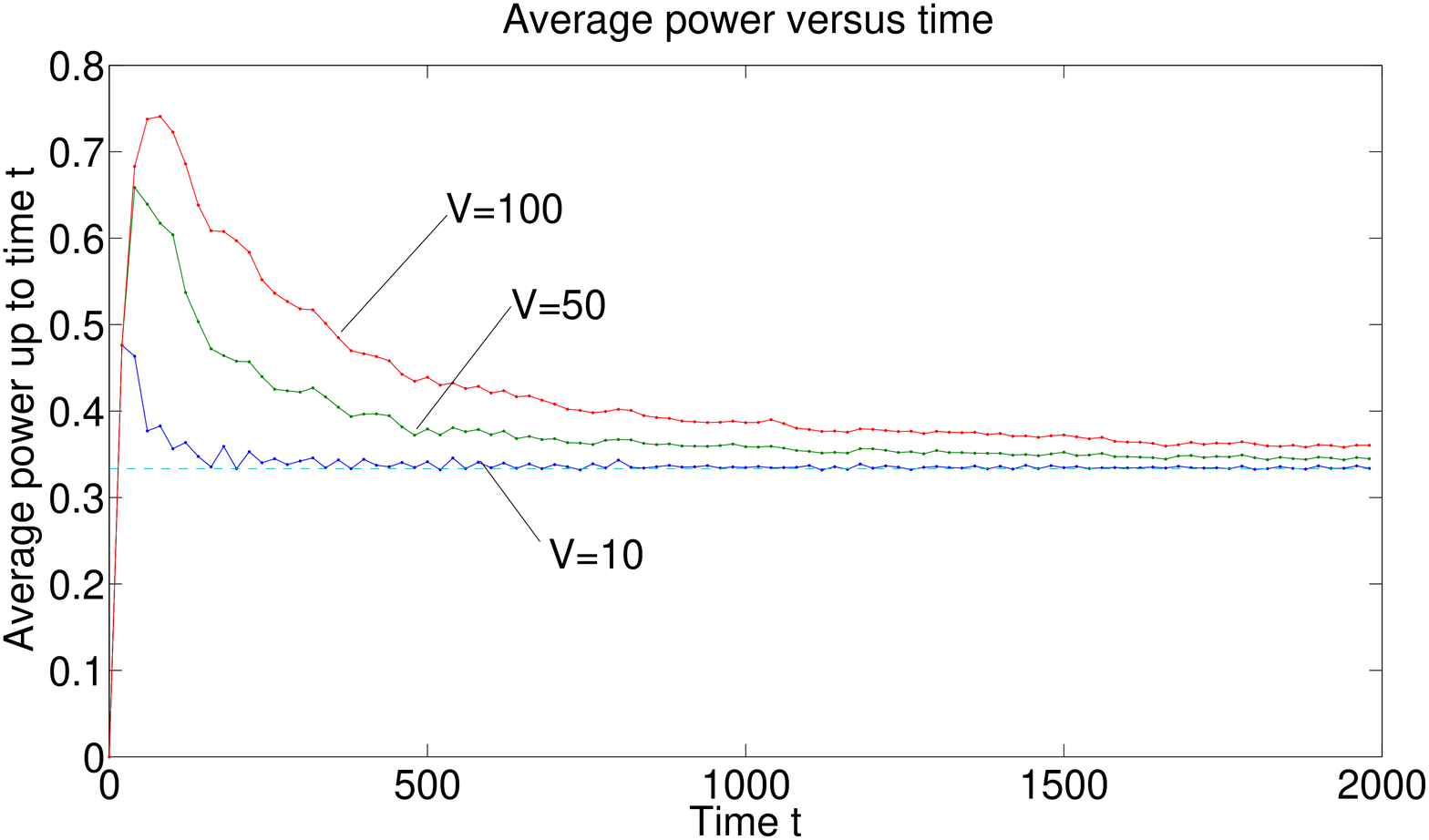} 
   \caption{An illustration of the rate of convergence to the desired constraint $1/3$ for various choices of $V$. The 
   curves plot $\max[\overline{p}_1(t), \overline{p}_2(t), \overline{p}_3(t)]$ versus $t$.}
   \label{fig:samplepath}
\end{figure}


\subsection{Adaptation to non-ergodic changes} 

The initial queue state determines the coefficient of an $O(1/t)$ transient in the performance bounds
of the system (consider the $\expect{L(D)}/(Vt)$ term in  \eqref{eq:perf1}).  Thus, if system probabilities change abruptly, 
the system can be viewed as restarting with a different initial condition.   Thus, one expects the system to react robustly to such changes. 

To illustrate this, consider the same 3-user system of the previous subsection, using $V=50, W=40$.  
The event processes $\omega_i(t)$ have the same probabilities as given in the previous subsection for slots $t<4000$ and $t >8000$.  Call this \emph{distribution type 1}.  However, for slots $t \in \{4000, \ldots, 8000\}$, the $\omega_i(t)$ processes are independently chosen with a different distribution as follows: 
\begin{itemize} 
\item $Pr[\omega_1(t)=0]=Pr[\omega_1(t)=9]=1/2$. 
\item $Pr[\omega_2(t)=k]=1/4$ for $k \in \{6, 7, 8, 9\}$. 
\item $Pr[\omega_3(t)=k]=1/4$ for $k \in \{6, 7, 8, 9\}$. 
\end{itemize} 
This is called \emph{distribution type 2}. 

Fig. \ref{fig:adapt} shows average utility and average power over the first $12000$ slots.  Values at each slot $t$ are averaged over $2000$ independent system runs. The two dashed horizontal lines in the top plot of the figure  are long term time average utilities achieved over  $10^6$ slots under probabilities that are fixed at distribution type 1 and type 2, respectively.  
 It is seen that the system adapts to the non-ergodic change by quickly adjusting to the new optimal average utility. The figure also plots average power of user $1$ versus time, with a dashed horizontal line at the power constraint $1/3$.  A noticeable disturbance in average power occurs at the non-ergodic changes in distribution.

It was observed that system performance is not very sensitive to inaccurate estimates of the $r_k^{(m)}$ values (results not shown in the figures).  This suggests that, for this example, the virtual queues alone are sufficient to ensure the average power constraints are met, which, together with loose estimates for $r_k^{(m)}$, are sufficient to provide an accurate approximation to optimality. 

\begin{figure}[htbp]
   \centering
   \includegraphics[width=3.5in]{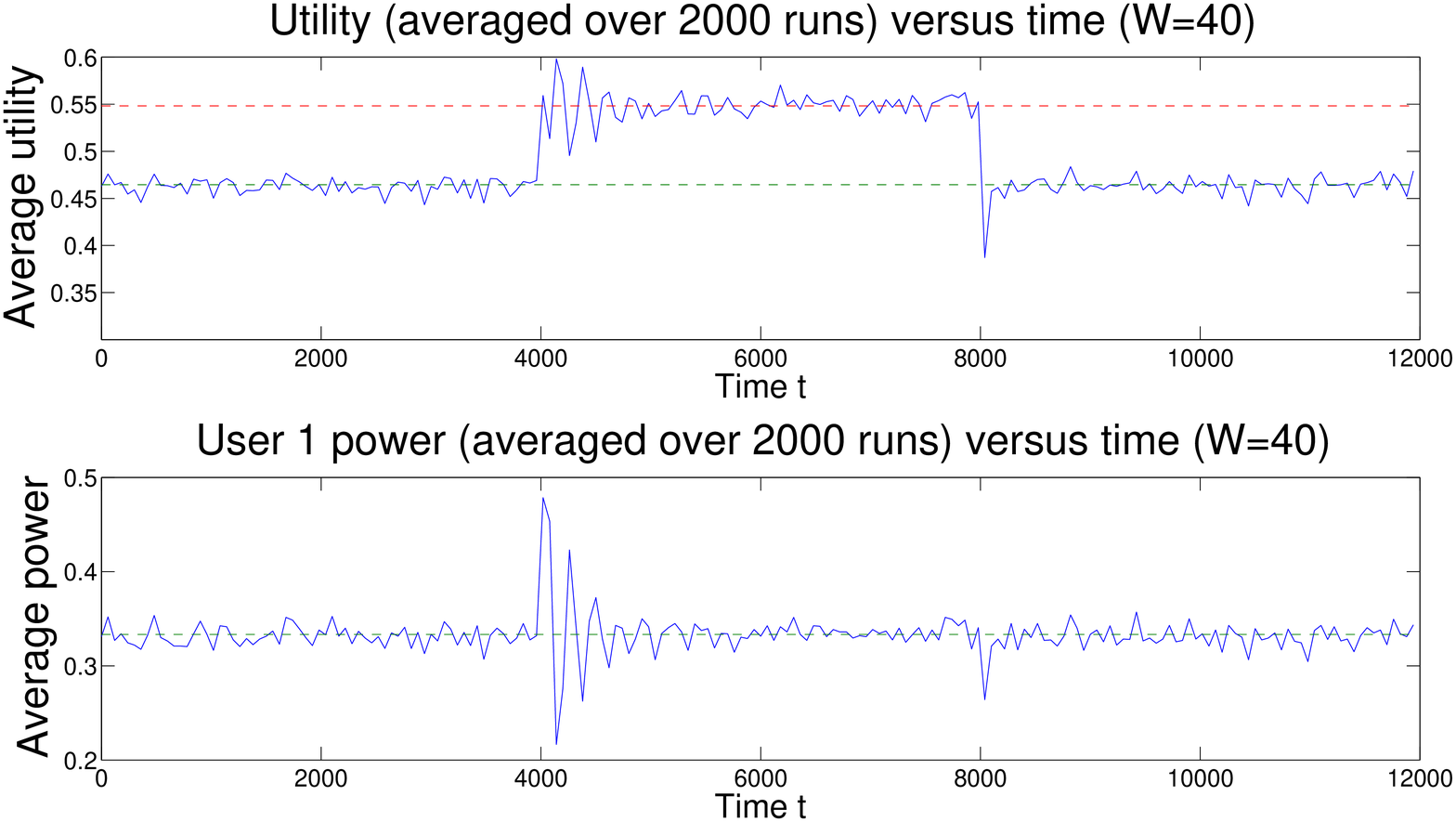} 
   \caption{A sample path of average utility and power versus time.  Values at each time slot $t$ are obtained by averaging the actual utility and power used by the algorithm on that slot over 2000 independent simulation runs.}
   \label{fig:adapt}
\end{figure}

\section{Conclusions} 

This paper treated distributed scheduling in a multi-user system where users know their own observations and actions, but not those of others.  In this context, there is a fundamental performance gap between distributed and centralized decisions.  
Optimal distributed policies were constructed by correlating decisions via a source of common randomness.   
The optimal policy is computable via a linear program if all system probabilities are known, and through an online algorithm with virtual queues if probabilities are unknown.  The online algorithm 
assumes there is delayed feedback about previous penalties and rewards.   The algorithm was shown in simulation to adapt when system probabilities change.  
In the special case when the events observed at each user are independent and when 
penalty and utility functions satisfy a \emph{preferred action property}, the number of pure strategies for consideration on each slot can be significantly reduced.  In some cases,  this reduces an exponentially complex algorithm to one that has only polynomial complexity. 

\section*{Appendix A --- Proof of Theorem \ref{thm:optimality}}

This appendix proves Theorem \ref{thm:optimality}. 
 Define the $(K+1)$-dimensional \emph{penalty vectors}: 
\begin{eqnarray*}
 \bv{p}(t) &=& (p_0(t), p_1(t), \ldots, p_K(t)) \\
 \hat{\bv{p}}(\bv{\alpha}, \bv{\omega}) &=& (\hat{p}_0(\bv{\alpha}, \bv{\omega}), \hat{p}_1(\bv{\alpha}, \bv{\omega}), \ldots, \hat{p}_K(\bv{\alpha}, \bv{\omega}))
 \end{eqnarray*} 
For each $m \in \{1, \ldots, M\}$, 
define: 
\[ \bv{r}^{(m)} \defequiv \sum_{\bv{\omega}\in\Omega} \pi(\bv{\omega})\hat{\bv{p}}(\bv{g}^{(m)}(\bv{\omega}), \bv{\omega}) = (r_0^{(m)}, r_1^{(m)}, \ldots, r_K^{(m)})  \]
Define $\script{R}$ as the convex hull of these vectors: 
\[ \script{R} \defequiv Conv\left(\{\bv{r}^{(1)}, \ldots, \bv{r}^{(M)}\}\right) \]
The set $\script{R}$ is convex, closed, and bounded.  From the nature of the convex hull operation, the set $\script{R}$ can be viewed as the set of all average penalty 
vectors achievable by timesharing over the $M$ different 
pure strategies. 

\begin{lem}  \label{lem:sat} Let $\bv{\alpha}(t)$ be decisions of an algorithm that satisfies the distributed scheduling constraint \eqref{eq:distributed-constraint} on every slot. Then: 

(a) For all slots $t \in \{0, 1, 2, \ldots\}$: 
\[ \expect{\bv{p}(t)} \in \script{R} \]

(b) For all slots $t\in \{1, 2, 3, \ldots\}$: 
\[ \overline{\bv{p}}(t) \in \script{R} \]
where
\[ \overline{\bv{p}}(t) \defequiv \frac{1}{t}\sum_{\tau=0}^{t-1} \expect{\bv{p}(\tau)} \]
\end{lem}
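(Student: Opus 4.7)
\textbf{Proof plan for Lemma \ref{lem:sat}.} The plan is to condition on the commonly known random element $X(t)$, which collapses the distributed decision rule into a (random) choice of a pure strategy; independence of $X(t)$ from $\bv{\omega}(t)$ then lets me evaluate the conditional expectation of $\bv{p}(t)$ as exactly one of the vectors $\bv{r}^{(m)}$. Part (a) then follows by taking a further expectation over $X(t)$ and recognizing the result as a convex combination of the $\bv{r}^{(m)}$. Part (b) is then immediate from convexity of $\script{R}$.

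For part (a), first fix a realization $X(t)=x$. Under the distributed scheduling constraint \eqref{eq:distributed-constraint}, each user plays $\alpha_i = f_{i}(\omega_i, x)$, so the decision rule acting on $\bv{\omega}$ is the deterministic map $\bv{\omega} \mapsto (f_1(\omega_1,x), \ldots, f_N(\omega_N,x))$. This map is a pure strategy in the sense of Section \ref{section:optimality}, hence equals $\bv{g}^{(m(x))}(\bv{\omega})$ for some index $m(x) \in \{1, \ldots, M\}$. Because $X(t)$ is independent of $\bv{\omega}(t)$, the slot-$t$ distribution of $\bv{\omega}(t)$ given $X(t)=x$ is still $\pi(\cdot)$, and so
\[ \expect{\bv{p}(t) \mid X(t) = x} = \sum_{\bv{\omega}\in\Omega} \pi(\bv{\omega}) \hat{\bv{p}}\bigl(\bv{g}^{(m(x))}(\bv{\omega}), \bv{\omega}\bigr) = \bv{r}^{(m(x))}. \]
Taking a further expectation over $X(t)$ and grouping terms by the value of $m(x)$,
\[ \expect{\bv{p}(t)} = \sum_{m=1}^M \theta_m(t)\, \bv{r}^{(m)}, \qquad \theta_m(t) \defequiv Pr[m(X(t)) = m], \]
which is a convex combination of $\bv{r}^{(1)}, \ldots, \bv{r}^{(M)}$ since $\{\theta_m(t)\}_{m=1}^M$ is a probability mass function. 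Hence $\expect{\bv{p}(t)} \in \script{R}$.

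Part (b) then follows because $\script{R}$ is convex: $\overline{\bv{p}}(t) = \frac{1}{t}\sum_{\tau=0}^{t-1} \expect{\bv{p}(\tau)}$ is a uniform average of $t$ vectors, each lying in $\script{R}$ by part (a), and any convex set is closed under finite convex combinations. The only mildly subtle point is that the common information set $\script{X}$ is abstract, so $X(t)$ need not be discrete; however, because there are only finitely many pure strategies, the map $x \mapsto m(x)$ takes at most $M$ distinct values and partitions $\script{X}$ into $M$ classes whose probabilities under $X(t)$ define the $\theta_m(t)$. Apart from this minor measurability remark, the argument has no substantive obstacle: the heavy lifting is entirely done by the independence $X(t) \perp \bv{\omega}(t)$ built into the definition of a distributed algorithm.
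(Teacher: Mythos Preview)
Your proposal is correct and follows essentially the same approach as the paper: condition on $X(t)$ to identify a pure strategy $\bv{g}^{(m(x))}$, use independence of $X(t)$ and $\bv{\omega}(t)$ to evaluate $\expect{\bv{p}(t)\mid X(t)}=\bv{r}^{(m(x))}$, then average over $X(t)$ to get a convex combination of the $\bv{r}^{(m)}$; part (b) follows from convexity of $\script{R}$. Your additional remark about the abstract (possibly non-discrete) nature of $\script{X}$ and the finite range of $x\mapsto m(x)$ is a helpful clarification that the paper leaves implicit.
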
 

\begin{proof} 
Part (b) follows immediately from part (a) together with the fact that $\script{R}$ is convex. 
To prove part (a), fix a slot $t \in \{0, 1, 2, \ldots\}$.  By \eqref{eq:distributed-constraint}, the users make decisions: 
\[ \bv{\alpha}(t) = \left(f_{1}(\omega_1(t), X(t)), \ldots, f_{N}(\omega_N(t), X(t))\right) \]
For each $X(t) \in \script{X}$ and $\bv{\omega} \in \Omega$, define: 
\[ \bv{g}_{X(t)}(\bv{\omega}) = \left(f_1(\omega_1, X(t)), \ldots, f_N(\omega_N, X(t))\right)  \]
Then, given $X(t)$, the function $\bv{g}_{X(t)}(\bv{\omega})$ is a pure strategy.  Hence, 
$\bv{g}_{X(t)}(\bv{\omega}) = \bv{g}^{(m)}(\bv{\omega})$ for some $m \in \{1, \ldots, M\}$.  Define $m_{X(t)}$ as the value $m \in \{1, \ldots, M\}$ 
for which this holds.  
Thus, $\bv{g}_{X(t)}(\bv{\omega}) = \bv{g}^{(m_{X(t)})}(\bv{\omega})$, and: 
\begin{eqnarray*}
 \expect{\bv{p}(t)|X(t)} &=& \expect{\hat{\bv{p}}(\bv{\alpha}(t), \bv{\omega}(t))|X(t)} \\ 
 &=& \expect{\hat{\bv{p}}\left(\bv{g}^{(m_{X(t)})}(\bv{\omega}(t)), \bv{\omega}(t)\right)|X(t)} \\
 &=& \sum_{\bv{\omega}\in\Omega} \pi(\bv{\omega}) \hat{\bv{p}}\left(\bv{g}^{(m_{X(t)})}(\bv{\omega}), \bv{\omega}\right) \\
 &=& \bv{r}^{(m_{X(t)})} 
\end{eqnarray*}
Taking expectations of both sides and using the law of iterated expectations gives: 
\[ \expect{\bv{p}(t)} = \sum_{m=1}^MPr[m_{X(t)}=m]\bv{r}^{(m)} \]
The above is a convex combination of $\{\bv{r}^{(1)}, \ldots, \bv{r}^{(M)}\}$, and hence is in $\script{R}$. 
\end{proof} 


\begin{lem} \label{lem:sat2} There exist real numbers $r_1, r_2, \ldots, r_K$ that satisfy the following: 
\begin{eqnarray}
r_k \leq c_k \: \: \: \: \forall k \in \{1, \ldots, K\} \label{eq:sat1} \\
(p_0^{opt}, r_1, r_2, \ldots, r_K) \in \script{R} \label{eq:sat2} 
\end{eqnarray} 
Furthermore, the vector in \eqref{eq:sat2} is on the \emph{boundary} of $\script{R}$. 
\end{lem}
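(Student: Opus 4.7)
The plan is to combine the compactness of $\script{R}$ with the infimum definition of $p_0^{opt}$, using Lemma \ref{lem:sat}(b) as the bridge between time-averaged performance of arbitrary distributed algorithms and the polytope $\script{R}$. First I would use feasibility together with the definition of $p_0^{opt}$ to produce, for each integer $n \geq 1$, a distributed algorithm with $\limsup_{t\to\infty}\overline{p}_0(t) \leq p_0^{opt} + 1/n$ and $\limsup_{t\to\infty}\overline{p}_k(t) \leq c_k$ for every $k \geq 1$. The $\limsup$ inequalities guarantee the existence of a slot $t_n$ large enough that $\overline{p}_0(t_n) \leq p_0^{opt} + 2/n$ and $\overline{p}_k(t_n) \leq c_k + 1/n$ for all $k$ simultaneously. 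By Lemma \ref{lem:sat}(b), the vector $\overline{\bv{p}}(t_n)$ lies in $\script{R}$. Since $\script{R}$ is the convex hull of the finite set $\{\bv{r}^{(1)}, \ldots, \bv{r}^{(M)}\} \subset \mathbb{R}^{K+1}$, it is compact, so a subsequence converges to some $\bv{y} = (y_0, y_1, \ldots, y_K) \in \script{R}$ with $y_0 \leq p_0^{opt}$ and $y_k \leq c_k$ for each $k \in \{1, \ldots, K\}$.

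Next I would upgrade $y_0 \leq p_0^{opt}$ to an equality by exhibiting $\bv{y}$ as the per-slot expected penalty vector of a single distributed algorithm. Since $\bv{y} \in \script{R}$, there exist probabilities $\theta_1, \ldots, \theta_M$ with $\bv{y} = \sum_{m=1}^M \theta_m \bv{r}^{(m)}$. The randomized policy that, on each slot $t$, independently draws an index $m$ with probability $\theta_m$ and then applies $\bv{g}^{(m)}(\bv{\omega}(t))$ is distributed (the random index plays the role of $X(t)$), achieves $\expect{\bv{p}(t)} = \bv{y}$ for every $t$, and is feasible because $y_k \leq c_k$. The infimum definition of $p_0^{opt}$ therefore forces $y_0 \geq p_0^{opt}$, and hence $y_0 = p_0^{opt}$. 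Setting $r_k = y_k$ for $k \in \{1, \ldots, K\}$ yields \eqref{eq:sat1}-\eqref{eq:sat2}.

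For the boundary claim I would argue by contradiction: if $(p_0^{opt}, r_1, \ldots, r_K)$ lay in the interior of $\script{R}$, then $(p_0^{opt} - \epsilon, r_1, \ldots, r_K) \in \script{R}$ for some $\epsilon > 0$. Expressing this perturbed vector as a convex combination of the $\bv{r}^{(m)}$ and running the corresponding pure-strategy randomization as above would produce a feasible distributed algorithm with $\expect{p_0(t)} = p_0^{opt} - \epsilon$ on every slot, hence $\limsup_{t}\overline{p}_0(t) = p_0^{opt} - \epsilon$, contradicting the definition of $p_0^{opt}$. The main obstacle is the first step: converting $\limsup$ constraints and the $\limsup$-based infimum into simultaneous pointwise bounds on a single time slot $t_n$, so that Lemma \ref{lem:sat}(b) applies and compactness can deliver a limit that is itself realizable by a randomized pure-strategy distributed algorithm.
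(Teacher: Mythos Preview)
Your proof is correct and follows essentially the same route as the paper: Lemma~\ref{lem:sat} places the time-average vectors in $\script{R}$, compactness extracts a limit, and the boundary claim is handled by constructing a randomized pure-strategy policy that would beat $p_0^{opt}$. The ``main obstacle'' you flag is not one: $\limsup_{t\to\infty} f(t) \leq c$ means $f(t) \leq c+\epsilon$ for all sufficiently large $t$, so a single slot $t_n$ satisfying all $K+1$ bounds simultaneously exists trivially. The only tactical difference is that the paper takes $t_n$ along a subsequence realizing the $\limsup$ of $\overline{p}_0(t)$, so the $0$th coordinate of the limit equals that $\limsup$ exactly and is squeezed between $p_0^{opt}$ and $p_0^{opt}+1/q$ directly from the infimum definition; this dispenses with your separate ``upgrade $y_0 \leq p_0^{opt}$ to equality'' step, though your version of that step is perfectly valid and in fact just reuses the boundary argument.
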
 

\begin{proof} 
Fix $q$ as a positive integer.   Consider an algorithm that satisfies the distributed scheduling constraint \eqref{eq:distributed-constraint} every slot.
For $k \in \{0, 1, \ldots, K\}$, let $\overline{p}_k(t)$ be the resulting time average expected penalties.  
Assume the algorithm satisfies: 
\begin{eqnarray}
p_0^{opt} \leq \limsup_{t\rightarrow\infty} \overline{p}_0(t) &\leq& p_0^{opt} + 1/q \label{eq:proof1} \\
\limsup_{t\rightarrow\infty} \overline{p}_k(t) &\leq& c_k \: \: \: \: \forall k \in \{1, \ldots, K\} \label{eq:proof2} 
 \end{eqnarray}
Such an algorithm must exist because $p_0^{opt}$ is the infimum objective value for \eqref{eq:q1} over all algorithms that satisfy the constraints \eqref{eq:q2}-\eqref{eq:q3}.

Lemma \ref{lem:sat} implies that $\overline{\bv{p}}(t) = (\overline{p}_0(t), \ldots, \overline{p}_K(t)) \in \script{R}$ for all $t>0$. 
Let $t_n$ be a subsequence of times over which $\overline{p}_0(t)$ achieves its $\limsup$.   Since $\overline{\bv{p}}(t_n)$ is in the closed and bounded
set  $\script{R}$ for all $t_n>0$, 
the Bolzano-Wierstrass theorem implies there is a subsequence $\overline{\bv{p}}(t_{n_m})$ that converges to a point
$\bv{r}(q) \in \script{R}$, where $\bv{r}(q) = (r_0(q), \ldots, r_K(q))$.  Thus: 
\begin{eqnarray} 
r_0(q) &=& \lim_{m\rightarrow\infty} \overline{p}_0(t_{n_m}) = \limsup_{t\rightarrow\infty} \overline{p}_0(t)  \label{eq:mid-proof} \\
r_k(q) &=& \lim_{m\rightarrow\infty} \overline{p}_k(t_{n_m}) \leq \limsup_{t\rightarrow\infty}\overline{p}_k(t)  \: \: \forall k \in \{1, \ldots, K\}\nonumber 
\end{eqnarray} 
Using \eqref{eq:proof2} in the last inequality above gives: 
\begin{equation} \label{eq:proof4} 
r_k(q) \leq c_k \: \: \: \: \forall k \in \{1, \ldots, K\} 
\end{equation} 
Further, substituting \eqref{eq:mid-proof} into
\eqref{eq:proof1} gives: 
\begin{eqnarray}
p_0^{opt} \leq r_0(q) \leq p_0^{opt} + 1/q \label{eq:proof5} 
\end{eqnarray}

This holds for all positive integers $q$.  Thus,  $\{\bv{r}(q)\}_{q=1}^{\infty}$ is an infinite sequence of vectors in  $\script{R}$ such that $\bv{r}(q)$ satisfies
\eqref{eq:proof4} and \eqref{eq:proof5} for all $q \in \{1, 2, 3, \ldots\}$.   Because $\script{R}$ is closed and bounded, the sequence $\{\bv{r}(q)\}_{q=1}^{\infty}$ has
a limit point 
$\bv{r} = (r_0, r_1, \ldots, r_K) \in \script{R}$ that satisfies $r_0=p_0^{opt}$ and 
$r_k \leq c_k$ for all $k \in \{1, \ldots, K\}$.  This proves \eqref{eq:sat1} and \eqref{eq:sat2}. 

To prove that $\bv{r}$ is on the \emph{boundary} of $\script{R}$, it suffices to note that for any $\epsilon>0$: 
\[ (p_0^{opt} - \epsilon, r_1, \ldots, r_K) \notin \script{R} \]
Indeed, if this were not true, it would be possible to construct a distributed algorithm that satisfies all desired constraints and yields
a time average expected value of $p_0(t)$ equal to $p_0^{opt} - \epsilon$, which contradicts the definition of $p_0^{opt}$. 
\end{proof} 

Because $\script{R}=Conv(\{\bv{r}^{(1)}, \ldots, \bv{r}^{(M)}\})$, 
Lemma \ref{lem:sat2} implies there are probabilities $\theta_m$ that sum to 1 such that: 
\[ (p_0^{opt}, r_1, \ldots, r_K) = \sum_{m=1}^M\theta_m \bv{r}^{(m)} \]
Because $\script{R}$ is a $(K+1)$-dimensional set, 
Caratheodory's theorem ensures the above can be written using at most $K+2$ non-zero $\theta_m$ values.  However, 
because the above vector is on the \emph{boundary} of $\script{R}$, a simple extension of Caratheodory's theorem ensures it can be written
using at most $K+1$ non-zero $\theta_m$ values.\footnote{This extension to points on the boundary of a convex hull can be proven using Caratheodory's theorem together with the 
supporting hyperplane theorem for convex sets \cite{bertsekas-convex}.} 
  This proves Theorem \ref{thm:optimality}. 

\section*{Appendix B --- A counterexample}

This appendix shows it is possible for an algorithm to satisfy the conditional independence assumption \eqref{eq:bad-distributed-constraint} while yielding expected utility strictly larger than that of any distributed algorithm.  Consider a two user system with $\omega_1(t), \omega_2(t)$ independent and i.i.d. Bernoulli processes with: 
\[ Pr[\omega_i(t)=1] = Pr[\omega_i(t)=0] = 1/2 \: \: \forall i \in \{1, 2\} \]
The actions are constrained to: 
\[ \alpha_1(t) \in \{-1, 1\} \: \: \: \: , \: \: \: \:   \alpha_2(t) \in \{-1, 1\}  \]
Define the utility function:
\[ \hat{u}(\alpha_1, \alpha_2, \omega_1, \omega_2) = g(\omega_1, \omega_2)\alpha_1\alpha_2 \]
where $g(\omega_1, \omega_2) = 1 - 2\omega_1\omega_2$.  Then $\hat{u}(\cdot) \in \{-1, 1\}$.  Fig. \ref{fig:counterexample} indicates when the utility is 1. 

\begin{figure}[cht] 
\centering
\begin{tabular}{|c | c | c | c |}
\hline 
$\omega_1$ & $\omega_2$ & $g(\omega_1, \omega_2)$ & Conditions required for $\hat{u}=1$  \\ \hline
 0 & 0 & 1 & $\alpha_1 = \alpha_2$ \\
 0 & 1 & 1 & $\alpha_1 = \alpha_2$ \\
 1 & 0 & 1 & $\alpha_1 = \alpha_2$ \\ \hline
 1 & 1 & -1 & $\alpha_1 \neq \alpha_2$ \\
  \hline
 \end{tabular}
 \caption{A table showing the conditions needed for $\hat{u}(\alpha_1, \alpha_2, \omega_1, \omega_2)=1$.}
 \label{fig:counterexample}
\end{figure}

Consider now the following \emph{centralized algorithm}:  Every slot $t$, observe $(\omega_1(t), \omega_2(t))$ and compute $g(\omega_1(t), \omega_2(t))$. 
\begin{itemize} 
\item If $g(\omega_1(t), \omega_2(t))=1$, independently choose: 
\[ (\alpha_1(t), \alpha_2(t)) = \left\{ \begin{array}{ll}
(1, 1)  &\mbox{ with probability 1/2} \\
(-1, -1)  & \mbox{ with probability 1/2} 
\end{array}
\right.\]
\item If $g(\omega_1(t), \omega_2(t)) = -1$, independently choose: 
\[ (\alpha_1(t), \alpha_2(t)) = \left\{ \begin{array}{ll}
(1, -1)  &\mbox{ with probability 1/2} \\
(-1, 1)  & \mbox{ with probability 1/2} 
\end{array}
\right.\]
\end{itemize} 
The randomization ensures that regardless of $(\omega_1(t), \omega_2(t))$: 
\begin{eqnarray*}
Pr[\alpha_1(t)=1|\omega_1(t), \omega_2(t)] &=& \frac{1}{2} \\
Pr[\alpha_2(t)=1|\omega_1(t), \omega_2(t)] &=& \frac{1}{2}
\end{eqnarray*}
and hence the conditional independence assumption \eqref{eq:bad-distributed-constraint} is satisfied.  This algorithm guarantees the utility function is 1 for all possible outcomes, and so the expected utility is also 1.  
However, it can be shown that an optimal \emph{distributed algorithm} is the pure strategy $\alpha_1(t)=\alpha_2(t)=1$ for all $t$ (regardless of $\omega_1(t), \omega_2(t)$), which yields an expected utility of only $1/2$. 

\section*{Appendix C --- Preferred Action Lemmas} 

This appendix provides proofs of Lemmas \ref{lem:preferred1}-\ref{lem:preferred4}.  
The proofs of Lemmas \ref{lem:preferred1} and \ref{lem:preferred2} follow from the following lemma. 

\begin{lem} \label{lem:preferred-new} A penalty function $\hat{p}(\bv{\alpha}, \bv{\omega})$ has the preferred action property if it 
satisfies the following three properties: 
\begin{itemize} 
\item $\script{A}_i = \{0,1\}$ for $i \in \{1, \ldots, N\}$. 
\item $\hat{p}(\bv{\alpha}, \bv{\omega})$ is non-increasing in the vector $\bv{\omega}$.  That is, for all $\bv{\alpha} \in \script{A}$ and all vectors $\bv{\omega}, \bv{\gamma} \in \Omega$ that satisfy $\bv{\omega} \leq \bv{\gamma}$ (with inequality taken entrywise), one has
\[ \hat{p}(\bv{\alpha}, \bv{\omega}) \geq \hat{p}(\bv{\alpha}, \bv{\gamma}) \] 
\item Given $\alpha_i=0$, $\hat{p}(\bv{\alpha},\bv{\omega})$ does not depend on $\omega_i$.  That is, for all $i \in \{1, \ldots, N\}$, all possible values of $\bv{\alpha}_{\overline{i}} \in \script{A}_{\overline{i}}$, $\bv{\omega}_{\overline{i}}\in\Omega_{\overline{i}}$, and all $\omega, \gamma \in \Omega_i$, one has: 
\[  \hat{p}([\bv{\alpha}_{\overline{i}}, 0], [\bv{\omega}_{\overline{i}}, \omega])  = \hat{p}([\bv{\alpha}_{\overline{i}}, 0], [\bv{\omega}_{\overline{i}}, \gamma])   \]
\end{itemize} 
\end{lem}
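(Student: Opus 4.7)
The plan is to verify the preferred action inequality directly by exploiting the binary action structure together with the two structural assumptions. Fix $i \in \{1, \ldots, N\}$, $\bv{\alpha}_{\overline{i}} \in \script{A}_{\overline{i}}$, $\bv{\omega}_{\overline{i}} \in \Omega_{\overline{i}}$, and pick $\alpha, \beta \in \script{A}_i = \{0,1\}$ with $\alpha > \beta$, as well as $\omega, \gamma \in \Omega_i$ with $\omega < \gamma$. Since $\script{A}_i = \{0,1\}$, the only admissible pair is $\alpha = 1$, $\beta = 0$, so the preferred action inequality reduces to the single case
\begin{eqnarray*}
\hat{p}([\bv{\alpha}_{\overline{i}}, 1], [\bv{\omega}_{\overline{i}}, \omega]) - \hat{p}([\bv{\alpha}_{\overline{i}}, 0], [\bv{\omega}_{\overline{i}}, \omega]) \\
\geq \hat{p}([\bv{\alpha}_{\overline{i}}, 1], [\bv{\omega}_{\overline{i}}, \gamma]) - \hat{p}([\bv{\alpha}_{\overline{i}}, 0], [\bv{\omega}_{\overline{i}}, \gamma]).
\end{eqnarray*}

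Next, the idea is to apply the third property to eliminate the two $\beta = 0$ terms. Because $\hat{p}([\bv{\alpha}_{\overline{i}}, 0], \cdot)$ does not depend on the $i$th component of $\bv{\omega}$, we obtain $\hat{p}([\bv{\alpha}_{\overline{i}}, 0], [\bv{\omega}_{\overline{i}}, \omega]) = \hat{p}([\bv{\alpha}_{\overline{i}}, 0], [\bv{\omega}_{\overline{i}}, \gamma])$, so those two terms cancel after rearrangement. The preferred action inequality therefore collapses to
\[ \hat{p}([\bv{\alpha}_{\overline{i}}, 1], [\bv{\omega}_{\overline{i}}, \omega]) \geq \hat{p}([\bv{\alpha}_{\overline{i}}, 1], [\bv{\omega}_{\overline{i}}, \gamma]). \]

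Finally, this last inequality is immediate from the monotonicity property: since $\omega < \gamma$ and all other coordinates are equal, the vectors satisfy $[\bv{\omega}_{\overline{i}}, \omega] \leq [\bv{\omega}_{\overline{i}}, \gamma]$ entrywise, and $\hat{p}$ is non-increasing in $\bv{\omega}$. This closes the verification. There is no real obstacle to overcome: the proof is a direct algebraic cancellation followed by one application of monotonicity, and the binary restriction on $\script{A}_i$ is what makes the reduction so clean. The same three steps then yield Lemmas~\ref{lem:preferred1} and~\ref{lem:preferred2} by checking that the $\hat{p}_0$ functions in those lemmas satisfy properties two and three (the third holding trivially because setting $\alpha_i = 0$ removes the $\phi_i(\omega_i)\alpha_i$ term or the $\alpha_i\prod_{j\neq i}(1-\alpha_j)$ term, making $\omega_i$ irrelevant; the second holding because each $\phi_i$ is non-decreasing so $-\hat{u}$ is non-increasing in $\bv{\omega}$).
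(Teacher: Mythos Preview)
Your proof is correct and follows essentially the same approach as the paper: reduce to $\alpha=1$, $\beta=0$ via the binary action constraint, cancel the two $\beta=0$ terms using the third property, and conclude via monotonicity in $\bv{\omega}$. The closing remark about Lemmas~\ref{lem:preferred1} and~\ref{lem:preferred2} is also in line with how the paper derives them from this lemma.
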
 
\begin{proof} 
Fix $i \in \{1, \ldots, N\}$, fix $\bv{\alpha}_{\overline{i}}$, $\bv{\omega}_{\overline{i}}$, and fix $\alpha, \beta \in \{0,1\}$, $\omega, \gamma \in \Omega_i$ that 
satisfy $\alpha > \beta$ and $\omega < \gamma$. Since $\alpha, \beta$ are binary numbers that satisfy $\alpha > \beta$, it must be that $\alpha=1$, $\beta=0$. 
The goal is to show: 
\begin{eqnarray*}
\hat{p}([\bv{\alpha}_{\overline{i}}, 1], [\bv{\omega}_{\overline{i}}, \omega]) - \hat{p}([\bv{\alpha}_{\overline{i}}, 0], [\bv{\omega}_{\overline{i}}, \omega]) \\
\geq 
\hat{p}([\bv{\alpha}_{\overline{i}}, 1], [\bv{\omega}_{\overline{i}}, \gamma]) - \hat{p}([\bv{\alpha}_{\overline{i}}, 0], [\bv{\omega}_{\overline{i}}, \gamma]) 
\end{eqnarray*}
Since the second term on the left-hand-side is the same as the second term on the right-hand-side, it suffices to show: 
\begin{eqnarray*}
\hat{p}([\bv{\alpha}_{\overline{i}}, 1], [\bv{\omega}_{\overline{i}}, \omega]) 
\geq 
\hat{p}([\bv{\alpha}_{\overline{i}}, 1], [\bv{\omega}_{\overline{i}}, \gamma]) 
\end{eqnarray*}
The above inequality is true because $\omega < \gamma$ and $\hat{p}(\bv{\alpha}, \bv{\omega})$ is non-increasing in the vector $\bv{\omega}$. 
\end{proof} 

\begin{proof} (Lemma \ref{lem:preferred1}) 
Suppose: 
\[ \hat{p}(\bv{\alpha}, \bv{\omega}) = -\min\left[\sum_{i=1}^N\phi_i(\omega_i)\alpha_i , b\right] \]
where $\script{A}_i = \{0, 1\}$ for $i \in \{1, \ldots, N\}$, $b$ is a real number, and 
all functions $\phi_i(\omega_i)$ are non-decreasing in $\omega_i$.  Then $\hat{p}(\bv{\alpha}, \bv{\omega})$ is non-increasing
in the $\bv{\omega}$ vector. Furthermore, for any given $i \in \{1, \ldots, N\}$, any $\bv{\alpha}_{\overline{i}} \in \script{A}_{\overline{i}}$, $\bv{\omega}_{\overline{i}}\in\Omega_{\overline{i}}$, and any $\omega, \gamma \in \Omega_i$,  one has: 
\begin{eqnarray*}
 \hat{p}([\bv{\alpha}_{\overline{i}}, 0], [\bv{\omega}_{\overline{i}}, \omega]) &=& -\min\left[\sum_{j\neq i}\phi_j(\omega_j)\alpha_j, b\right] \\
 &=&  \hat{p}([\bv{\alpha}_{\overline{i}}, 0], [\bv{\omega}_{\overline{i}}, \gamma])
\end{eqnarray*}
Thus, $\hat{p}(\bv{\alpha}, \bv{\omega})$ satisfies the requirements of Lemma \ref{lem:preferred-new}. 
\end{proof} 

\begin{proof} (Lemma \ref{lem:preferred2}) Suppose: 
\[  \hat{p}(\bv{\alpha}, \bv{\omega}) = -\sum_{i=1}^N\omega_i\alpha_i\prod_{j \neq i} (1-\alpha_j) \]
where $\alpha_i \in \{0, 1\}$ and $\omega_i \in \{0, 1, \ldots, |\Omega_i|-1\}$ for all $i \in \{1, \ldots, N\}$.  Then 
$\hat{p}(\bv{\alpha}, \bv{\omega})$ is non-increasing in the $\bv{\omega}$ vector.  Now 
fix $i \in \{1, \ldots, N\}$, fix $\bv{\alpha}_{\overline{i}}$, $\bv{\omega}_{\overline{i}}$, and fix $\omega, \gamma \in \Omega_i$.  Then: 
\begin{eqnarray*}
  \hat{p}([\bv{\alpha}_{\overline{i}}, 0], [\bv{\omega}_{\overline{i}}, \omega])  &=& -\sum_{k\neq i} \omega_k\alpha_k\prod_{j\neq k}(1-\alpha_j)\\
  &=&  \hat{p}([\bv{\alpha}_{\overline{i}}, 0], [\bv{\omega}_{\overline{i}}, \gamma]) 
  \end{eqnarray*}
  Thus, $\hat{p}(\bv{\alpha}, \bv{\omega})$ satisfies the requirements of Lemma \ref{lem:preferred-new}. 
\end{proof} 

\begin{proof} (Lemma \ref{lem:preferred3})  
Suppose: 
\[ \hat{p}(\bv{\alpha}, \bv{\omega}) = \prod_{i=1}^N\phi_i(\omega_i)\psi_i(\alpha_i)   \]
where $\phi_i(\omega_i)$ is non-negative and non-increasing in $\omega_i$ and $\psi_i(\alpha_i)$ is non-negative and non-decreasing in $\alpha_i$. 
Fix $i \in \{1, \ldots, N\}$, fix $\bv{\alpha}_{\overline{i}}$, $\bv{\omega}_{\overline{i}}$, and fix $\alpha, \beta \in \script{A}_i$, $\omega, \gamma \in \Omega_i$ that 
satisfy $\alpha > \beta$ and $\omega < \gamma$.  The goal is to show: 
\begin{eqnarray*}
\hat{p}([\bv{\alpha}_{\overline{i}}, \alpha], [\bv{\omega}_{\overline{i}}, \omega]) - \hat{p}([\bv{\alpha}_{\overline{i}}, \beta], [\bv{\omega}_{\overline{i}}, \omega]) \\
\geq 
\hat{p}([\bv{\alpha}_{\overline{i}}, \alpha], [\bv{\omega}_{\overline{i}}, \gamma]) - \hat{p}([\bv{\alpha}_{\overline{i}}, \beta], [\bv{\omega}_{\overline{i}}, \gamma]) 
\end{eqnarray*}
By canceling common (non-negative) factors, it suffices to show: 
\begin{eqnarray*}
\phi_i(\omega)\psi_i(\alpha) - \phi_i(\omega)\psi_i(\beta) \geq \phi_i(\gamma)\psi_i(\alpha) - \phi_i(\gamma)\psi_i(\beta) 
\end{eqnarray*}
This is equivalent to: 
\begin{eqnarray}
\phi_i(\omega)(\psi_i(\alpha) - \psi_i(\beta)) \geq \phi_i(\gamma)(\psi_i(\alpha) - \psi_i(\beta)) \label{eq:suffices2}
\end{eqnarray}
Since $\alpha > \beta$ and $\psi_i(\alpha)$ is non-decreasing, 
one has $\psi_i(\alpha)-\psi_i(\beta) \geq 0$. By canceling the common (non-negative) factor, it suffices to show: 
\[ \phi_i(\omega) \geq \phi_i(\gamma) \]
This is true because $\omega < \gamma$ and $\phi_i(\omega)$ is non-increasing. 
\end{proof} 

\begin{proof} (Lemma \ref{lem:preferred4}) Suppose: 
\[ \hat{p}(\bv{\alpha}, \bv{\omega}) =  \sum_{r=1}^R w_r\hat{p}_r(\bv{\alpha}, \bv{\omega}) \]
where $w_r$ are non-negative constants, and each function $\hat{p}_r(\bv{\alpha}, \bv{\omega})$ has the preferred
action property. Fix $i \in \{1, \ldots, N\}$, fix $\bv{\alpha}_{\overline{i}}$, $\bv{\omega}_{\overline{i}}$, and fix $\alpha, \beta \in \script{A}_i$, $\omega, \gamma \in \Omega_i$ that 
satisfy $\alpha > \beta$ and $\omega < \gamma$.  Since each function $\hat{p}_r(\bv{\alpha}, \bv{\omega})$ has the preferred
action property, one has for all $r \in \{1, \ldots, R\}$: 
\begin{eqnarray*}
\hat{p}_r([\bv{\alpha}_{\overline{i}}, \alpha], [\bv{\omega}_{\overline{i}}, \omega]) - \hat{p}_r([\bv{\alpha}_{\overline{i}}, \beta], [\bv{\omega}_{\overline{i}}, \omega]) \\
\geq 
\hat{p}_r([\bv{\alpha}_{\overline{i}}, \alpha], [\bv{\omega}_{\overline{i}}, \gamma]) - \hat{p}_r([\bv{\alpha}_{\overline{i}}, \beta], [\bv{\omega}_{\overline{i}}, \gamma]) 
\end{eqnarray*}
Multiplying the above inequality by $w_r$ and summing over $r \in \{1, \ldots, R\}$ proves that $\hat{p}(\bv{\alpha}, \bv{\omega})$ has the preferred
action property. 
\end{proof} 

\section{Appendix D --- The Slater condition} 

For a given real number $\epsilon\geq 0$, consider the following linear program that is related to the linear program \eqref{eq:lp1}-\eqref{eq:lp4}: 
\begin{eqnarray}
\mbox{Minimize:} & \sum_{m=1}^M\theta_mr_0^{(m)}  \label{eq:newlp1} \\
\mbox{Subject to:} & \sum_{m=1}^M\theta_m r_k^{(m)} \leq c_k - \epsilon \: \: \forall k \in \{1, \ldots, K\} \label{eq:newlp2} \\
& \theta_m \geq 0 \: \: \forall m \in \{1, \ldots, M\} \label{eq:newlp3} \\
& \sum_{m=1}^M\theta_m = 1 \label{eq:newlp4} 
\end{eqnarray}
If $\epsilon>0$, 
the penalty constraints are tighter above than in the linear program \eqref{eq:lp1}-\eqref{eq:lp4} (compare 
\eqref{eq:newlp2} and \eqref{eq:lp2}). 
Define $G(\epsilon)$ as the the optimal objective value \eqref{eq:newlp1} as a function of the parameter $\epsilon$.  Then $G(0) = p_0^{opt}$, where $p_0^{opt}$ corresponds to the original linear program \eqref{eq:lp1}-\eqref{eq:lp4}.  Define $\epsilon_{max}$ as the largest value of $\epsilon$ for which \eqref{eq:newlp1}-\eqref{eq:newlp4} is feasible.  Suppose $\epsilon_{max}>0$.  This means it is possible to satisfy the desired time average penalty constraints with a slackness of $\epsilon_{max}$ in each constraint $k \in \{1, \ldots, K\}$. The condition $\epsilon_{max} > 0$ is called the \emph{Slater condition} \cite{bertsekas-nonlinear}. 

For simplicity of exposition, assume $D=0$. 
Since the drift-plus-penalty algorithm takes actions that minimize the right-hand-side of \eqref{eq:dpp} over all probability mass functions
$\beta_m(t)$, one has: 
\begin{eqnarray*}
 \expect{\Delta(t) + Vp_0(t)|\bv{Q}(t)} \leq B \nonumber \\
 V\sum_{m=1}^{\tilde{M}}\theta_m r_0^{(m)}  
 + \sum_{k=1}^KQ_k(t)\left[\sum_{m=1}^{\tilde{M}} \theta_m r_k^{(m)}-c_k\right]   
 \end{eqnarray*}
 for any values $\theta_m$ that satisfy \eqref{eq:newlp3}-\eqref{eq:newlp4}.  Using $\theta_m$ values that solve \eqref{eq:newlp1}-\eqref{eq:newlp4} for the case $\epsilon=\epsilon_{max}$ gives: 
\begin{eqnarray*}
 \expect{\Delta(t) + Vp_0(t)|\bv{Q}(t)} \leq B \nonumber \\
 VG(\epsilon_{max})  
 -\epsilon_{max} \sum_{k=1}^KQ_k(t)  
 \end{eqnarray*}
 Therefore, for all slots $t \in \{0, 1, 2, \ldots\}$ one has: 
 \begin{equation} \label{eq:therefore}
  \expect{\Delta(t)|\bv{Q}(t)} \leq B + FV - \epsilon_{max}\sum_{k=1}^KQ_k(t) 
  \end{equation} 
 where $F$ is a constant that satisfies the following for all slots $t$ and all possible values of 
 $\bv{Q}(t)$: 
 \[ F \geq G(\epsilon_{max}) -  \expect{p_0(t)|\bv{Q}(t)}  \]
Now define $\delta_{max}$ as the largest possible change in $\norm{\bv{Q}(t)}$ from one slot to the next, so that regardless of the control decisions, one has: 
\begin{equation} \label{eq:delta-max} 
\left|\norm{\bv{Q}(t+1)} - \norm{\bv{Q}(t)}\right| \leq \delta_{max} \: \: \forall t \in \{0, 1,2, \ldots\} 
\end{equation} 
Such a value $\delta_{max}$ exists because all penalty functions $\hat{p}_k(\bv{\alpha}(t), \bv{\omega}(t))$ are bounded.

\begin{lem} \label{lem:slater} Let $\delta_{max}$ be a positive value that satisfies \eqref{eq:delta-max}. 
Let $A$ be a non-negative real number,  and let $\epsilon>0$.  Assume $\norm{\bv{Q}(0)}=0$ with probability 1, and that 
for all slots $t$ and 
all possible $\bv{Q}(t)$ one has: 
\begin{eqnarray} \label{eq:cond} 
\expect{\Delta(t)|\bv{Q}(t)} \leq A - \epsilon \sum_{k=1}^KQ_k(t)
\end{eqnarray}
Then for all slots $t \in \{1, 2, \ldots\}$: 
\begin{eqnarray*}
 &&\hspace{-.3in}\expect{\norm{\bv{Q}(t)}} \leq \\
 &&\max\left[\frac{\log(2)}{r}, \max\left[\frac{2A}{\epsilon}, \frac{\epsilon}{2}\right] +\frac{\log(2t[e^{r\delta_{max}}-1])}{r}\ \right] 
 \end{eqnarray*}
 where $r$ is defined: 
  \begin{equation} \label{eq:r} 
 r = \frac{\epsilon}{\delta_{max}^2 + \epsilon\delta_{max}/3} 
 \end{equation}  
\end{lem}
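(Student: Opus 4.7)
The plan is to define the exponential Lyapunov function $Z(t) \defequiv e^{r\norm{\bv{Q}(t)}}$, show its expected one-step increment is bounded by a constant $C$ independent of $\bv{Q}(t)$, iterate from $Z(0)=1$ (which holds since $\norm{\bv{Q}(0)}=0$ w.p.\ 1) to conclude $\expect{Z(t)}\leq 1+Ct$, and finish by applying Jensen's inequality $e^{r\expect{\norm{\bv{Q}(t)}}}\leq \expect{Z(t)}$ to turn the linear growth of $\expect{Z(t)}$ into the stated logarithmic bound on $\expect{\norm{\bv{Q}(t)}}$. Writing $Y(t)\defequiv \norm{\bv{Q}(t+1)}-\norm{\bv{Q}(t)}$, so that $|Y(t)|\leq \delta_{max}$ by \eqref{eq:delta-max} and $Z(t+1)=Z(t)e^{rY(t)}$, the core task reduces to controlling $\expect{e^{rY(t)}|\bv{Q}(t)}$.

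The first technical step is a Bernstein-type moment generating function estimate. Expanding $e^{rY}$ as a power series, using $|Y|^n\leq Y^2\delta_{max}^{n-2}$ for $n\geq 2$, and exploiting $n!\geq 2\cdot 3^{n-2}$ to sum the resulting tail as a geometric series, I would obtain
\[
\expect{e^{rY(t)}|\bv{Q}(t)} \leq 1 + r\expect{Y(t)|\bv{Q}(t)} + \frac{r^2\expect{Y(t)^2|\bv{Q}(t)}/2}{1 - r\delta_{max}/3},
\]
which is valid because the $r$ of \eqref{eq:r} satisfies $r\delta_{max}/3<1$. To feed the quadratic drift hypothesis \eqref{eq:cond} into the linear-in-$Y$ term, I expand $\norm{\bv{Q}(t+1)}^2 - \norm{\bv{Q}(t)}^2 = 2\norm{\bv{Q}(t)}Y(t)+Y(t)^2$, take conditional expectations, and use $\sum_k Q_k(t)\geq \norm{\bv{Q}(t)}$ (since the $L_1$ norm dominates the $L_2$ norm for non-negative vectors) to deduce, whenever $\norm{\bv{Q}(t)}>0$, that $\expect{Y(t)|\bv{Q}(t)} \leq A/\norm{\bv{Q}(t)} - \epsilon$.

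The key algebraic step is then a direct computation showing that the particular $r$ in \eqref{eq:r} makes $\tfrac{r^2\delta_{max}^2/2}{1 - r\delta_{max}/3}$ equal exactly $r\epsilon/2$. Plugging the mean bound, the trivial variance bound $\expect{Y(t)^2|\bv{Q}(t)}\leq \delta_{max}^2$, and this identity into the mgf inequality produces $\expect{e^{rY(t)}|\bv{Q}(t)}\leq 1 + rA/\norm{\bv{Q}(t)} - r\epsilon/2$. With $\eta\defequiv \max[2A/\epsilon,\epsilon/2]$, the right-hand side is $\leq 1$ as soon as $\norm{\bv{Q}(t)}\geq \eta$, so the exponential drift $\expect{Z(t+1)-Z(t)|\bv{Q}(t)}$ is non-positive in that regime. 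When $\norm{\bv{Q}(t)}<\eta$, I instead use $Z(t+1)\leq Z(t)e^{r\delta_{max}}$ together with $Z(t)\leq e^{r\eta}$ to get $\expect{Z(t+1)-Z(t)|\bv{Q}(t)}\leq e^{r\eta}(e^{r\delta_{max}}-1)$. The two cases combine into a uniform increment bound with $C = e^{r\eta}(e^{r\delta_{max}}-1)$, giving $\expect{Z(t)}\leq 1+Ct$; Jensen's inequality followed by the elementary estimate $\log(1+x)\leq\max[\log 2,\log(2x)]$ then separates into the two terms of the stated maximum.

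The main obstacle is identifying the right mgf inequality (Bernstein's rather than Hoeffding's) and verifying the exact algebraic collapse in the third step, because this cancellation is precisely what allows the exponential drift to become non-positive above the threshold $2A/\epsilon$. A secondary subtlety is that \eqref{eq:cond} controls $\expect{\Delta(t)|\bv{Q}(t)}$ rather than $\expect{Y(t)|\bv{Q}(t)}$ directly, so one must pass through the identity for $\norm{\bv{Q}(t+1)}^2-\norm{\bv{Q}(t)}^2$ and drop the non-negative $\expect{Y(t)^2|\bv{Q}(t)}$ term to extract a usable linear drift bound, while still retaining the variance factor $\delta_{max}^2$ in the quadratic correction where it is needed for the cancellation.
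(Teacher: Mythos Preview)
Your proposal is correct and follows essentially the same route as the paper: define the exponential Lyapunov function $Z(t)=e^{r\norm{\bv{Q}(t)}}$, control $e^{rY(t)}-1$ by the Bernstein-type estimate $rY(t)+\frac{(r\delta_{max})^2}{2(1-r\delta_{max}/3)}$, choose $r$ exactly as in \eqref{eq:r} so the quadratic correction equals $r\epsilon/2$, split into the two regimes $\norm{\bv{Q}(t)}\geq\eta$ and $\norm{\bv{Q}(t)}<\eta$, iterate, and finish with Jensen plus $\log(1+x)\leq\max[\log 2,\log(2x)]$. The only cosmetic difference is that the paper first passes through Jensen on $\expect{\norm{\bv{Q}(t+1)}^2|\bv{Q}(t)}\leq(\norm{\bv{Q}(t)}-\epsilon/2)^2$ to get the clean linear drift $\expect{Y(t)|\bv{Q}(t)}\leq-\epsilon/2$ above the threshold (which is also where the $\epsilon/2$ in $\eta$ actually enters), whereas you divide by $2\norm{\bv{Q}(t)}$ to obtain the slightly sharper $\expect{Y(t)|\bv{Q}(t)}\leq A/\norm{\bv{Q}(t)}-\epsilon$ and then impose the threshold afterward; both lead to the same bound.
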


Using $A = B + FV$ in \eqref{eq:therefore} shows that the system under study satisfies the requirements of the above lemma, 
which proves that 
\eqref{eq:slater} holds. 
The proof of the above lemma relies heavily on drift analysis in \cite{longbo-lagrange-tac} and results for exponentiated
martingales in \cite{martingale-concentration}. 

\begin{proof} (Lemma \ref{lem:slater}) 
Suppose that:
\begin{equation}
 \norm{\bv{Q}(t)} \geq \max\left[2A/\epsilon, \epsilon/2\right] \label{eq:suppose} 
\end{equation} 
By definition of $\Delta(t)$, one has from \eqref{eq:cond}:
\begin{eqnarray}
&&\hspace{-.3in}\expect{\norm{\bv{Q}(t+1)}^2|\bv{Q}(t)} \nonumber \\
&\leq& \norm{\bv{Q}(t)}^2  + 2A - 2\epsilon \sum_{k=1}^KQ_k(t)  \nonumber \\
&\leq& \norm{\bv{Q}(t)}^2 + 2A - 2\epsilon\norm{\bv{Q}(t)}  \label{eq:g1} \\
&\leq& \norm{\bv{Q}(t)}^2 - \epsilon\norm{\bv{Q}(t)} \label{eq:g2} \\
&\leq& (\norm{\bv{Q}(t)} - \epsilon/2)^2 \nonumber
\end{eqnarray}
where \eqref{eq:g1} holds because the sum of the components of a non-negative vector is greater than or equal to its norm, 
and  \eqref{eq:g2} holds because \eqref{eq:suppose} implies $\epsilon\norm{\bv{Q}(t))} \geq 2A$. By Jensen's inequality: 
\begin{eqnarray*}
\expect{\norm{\bv{Q}(t+1)}|\bv{Q}(t)}^2 
\leq (\norm{\bv{Q}(t)} - \epsilon/2)^2
\end{eqnarray*}
Taking the square root of both sides and using \eqref{eq:suppose} gives: 
\begin{equation} \label{eq:holds} 
 \expect{\norm{\bv{Q}(t+1)|\bv{Q}(t)}} \leq \norm{\bv{Q}(t)} - \epsilon/2 
 \end{equation} 
 Define $C$ by: 
 \[ C \defequiv  \max\left[2A/\epsilon, \epsilon/2\right]  \]
 so that \eqref{eq:holds} holds whenever $\norm{\bv{Q}(t)} \geq C$. 
Define $\delta(t)$ by: 
\begin{eqnarray*}
 \delta(t) \defequiv \norm{\bv{Q}(t+1)} - \norm{\bv{Q}(t)} 
 \end{eqnarray*}
and note that $|\delta(t)| \leq \delta_{max}$ for all $t$. 
It follows that: 
\begin{eqnarray}
\expect{\delta(t)|\bv{Q}(t)}
\leq \left\{ \begin{array}{ll}
  - \epsilon/2 &\mbox{ if $\norm{\bv{Q}(t)} \geq C$}  \\
\delta_{max}  & \mbox{ otherwise}  
\end{array}
\right.
\label{eq:bracket} 
\end{eqnarray}

Define $Y(t) = e^{r\norm{\bv{Q}(t}}$ for a positive value of $r$ to be determined.  Assume that $r$ satisfies: 
\begin{equation} \label{eq:rcond1} 
 0 \leq r\delta_{max} < 3 
 \end{equation} 
  Then: 
\begin{eqnarray*}
Y(t+1) - Y(t) &=& e^{r\norm{\bv{Q}(t)}} e^{r\delta(t)}- Y(t) \\
 &=& Y(t) [e^{r\delta(t)}-1]  \\
 &\leq& \left\{ \begin{array}{ll}
  Y(t)[e^{r\delta(t)} - 1] &\mbox{ if $\norm{\bv{Q}(t)} \geq C$}  \\
e^{rC}[e^{r\delta_{max}} - 1]  & \mbox{ otherwise}  
\end{array}
\right.
\end{eqnarray*}

Now define $g(x)$ as the function that satisfies the following for all real numbers $x$: 
\begin{equation} \label{eq:expansion} 
 e^{x} -1 =  x + \frac{x^2}{2} g(x) 
 \end{equation} 
By results in \cite{martingale-concentration}, the function $g(x)$ is non-decreasing in $x$ and satisfies:  
\begin{equation} \label{eq:g} 
g(x) \leq \frac{1}{1-x/3} \: \: \: \forall x \in [0, 3) 
\end{equation} 
It follows from \eqref{eq:expansion} that: 
\begin{eqnarray*}
e^{r\delta(t)} - 1 &=& r\delta(t) + \frac{(r\delta(t))^2}{2}g(r\delta(t)) \\
&\leq& r\delta(t) + \frac{(r\delta_{max})^2}{2}g(r\delta_{max}) \\
&\leq& r\delta(t) + \frac{(r\delta_{max})^2}{2(1-r\delta_{max}/3)} 
\end{eqnarray*}
where the final inequality uses \eqref{eq:g}, which  
is justified because $r\delta_{max}$ satisfies \eqref{eq:rcond1}. 
Thus: 
\begin{eqnarray*}
&&\hspace{-.4in}Y(t+1) - Y(t)\\
 &\leq& \left\{ \begin{array}{ll}
  Y(t)[r\delta(t) + \frac{(r\delta_{max})^2}{2(1-r\delta_{max}/3)}] &\mbox{ if $\norm{\bv{Q}(t)} \geq C$}  \\
e^{rC}[e^{r\delta_{max}} - 1]  & \mbox{ otherwise}  
\end{array}
\right.
\end{eqnarray*}
Taking expectations and using \eqref{eq:bracket} gives: 
\begin{eqnarray*}
&&\hspace{-.4in}\expect{Y(t+1) - Y(t)|\bv{Q}(t)}\\
 &\leq& \left\{ \begin{array}{ll}
  Y(t)[\frac{-r\epsilon}{2}  + \frac{(r\delta_{max})^2}{2(1-r\delta_{max}/3)}] &\mbox{ if $\norm{\bv{Q}(t)} \geq C$}  \\
e^{rC}[e^{r\delta_{max}} - 1]  & \mbox{ otherwise}  
\end{array}
\right.
\end{eqnarray*}

Now choose $r$ so that: 
\[ \frac{r\epsilon}{2} = \frac{(r\delta_{max})^2}{2(1-r\delta_{max}/3)} \]
This holds for $r$ as defined in \eqref{eq:r}, 
and this choice of $r$ maintains the inequality \eqref{eq:rcond1}. Thus: 
\begin{eqnarray*}
&&\hspace{-.3in}\expect{Y(t+1)-Y(t)|\bv{Q}(t)} \\
&&\leq  \left\{ \begin{array}{ll}
0 & \mbox{ if $\norm{\bv{Q}(t)}\geq C$} \\
e^{rC}[e^{r\delta_{max}} - 1] &\mbox{ otherwise} 
\end{array}
\right.
\end{eqnarray*}
Therefore, for all slots $t$: 
\[ \expect{Y(t+1) - Y(t)} \leq e^{rC}[e^{r\delta_{max}}-1] \]
Summing the above over $\tau \in \{0, 1, \ldots, t-1\}$ for some integer $t>0$ gives: 
\[\expect{Y(t)} - \expect{Y(0)} \leq e^{rC}[e^{r\delta_{max}}-1]t\]
Since $Y(0)=1$ with probability 1, and $Y(t) = e^{r\norm{\bv{Q}(t)}}$, one has: 
\[ \expect{e^{r\norm{\bv{Q}(t)}}} - 1 \leq  e^{rC}[e^{r\delta_{max}}-1]t\]
By Jensen's inequality for the convex function $e^x$ one has: 
\[ e^{r\expect{\norm{\bv{Q}(t)}}} - 1 \leq  e^{rC}[e^{r\delta_{max}}-1]t\]
Thus: 
\begin{eqnarray*}
 r\expect{\norm{\bv{Q}(t)}} &\leq& \log(1 +e^{rC}[e^{r\delta_{max}}-1]t)  \\
 &\leq& \max[\log(2), \log(2e^{rC}[e^{r\delta_{max}}-1]t)] \\
 &\leq& \max[\log(2), rC + \log(2t[e^{r\delta_{max}}-1])]
 \end{eqnarray*}
Dividing the above by $r$ gives the following, which holds for all integers $t>0$: 
\begin{eqnarray*}
\expect{\norm{\bv{Q}(t)}} \leq \max\left[\frac{\log(2)}{r}, C + \frac{\log(2t[e^{r\delta_{max}}-1])}{r}\right] 
\end{eqnarray*}
\end{proof} 

\section*{Appendix E --- The constant in Theorem \ref{thm:performance}} 

This appendix proves the inequality involving the $2BD$ constant 
at the end of the proof of Theorem \ref{thm:performance}.  From \eqref{eq:q-update} one has for all queues
$k \in \{1, 2, \ldots, K\}$ and all slots $\tau$: 
\[ |Q_k(\tau+1) - Q_k(\tau)| \leq |p_k(\tau-D) - c_k| \]
Thus, for all slots $t$: 
\begin{eqnarray*}
|Q_k(t+D) - Q_k(t)|  &\leq& \sum_{d=1}^D |Q_k(t+d) - Q_k(t+d-1)| \\
&\leq& \sum_{d=1}^D|p_k(t+d-1-D) - c_k| \\
&=& \sum_{d=1}^D|p_k(t_d) - c_k| \\
\end{eqnarray*}
where for notational simplicity $t_d$ has been defined: 
\[ t_d \defequiv t + d-1-D \]
Thus: 
\begin{eqnarray*}
&&\hspace{-.5in}\sum_{k=1}^K(Q_k(t+D)-Q_k(t))(p_k(t)-c_k) \\
&\leq& \sum_{k=1}^K\sum_{d=1}^D|p_k(t_d)-c_k||p_k(t)-c_k|
\end{eqnarray*}
Taking expectations of the above and using the Cauchy-Schwartz inequality:\footnote{Strictly speaking, these expectations should be conditioned on $\bv{Q}(t)$ to match with the inequalities at the end of Theorem \ref{thm:performance}.  That explicit conditioning has been suppressed to simplify the expressions.} 
\begin{eqnarray*}
&& \hspace{-.5in}\expect{\sum_{k=1}^K(Q_k(t+D)-Q_k(t))(p_k(t)-c_k)}  \\
&\leq& \sum_{k=1}^K\sum_{d=1}^D \sqrt{\expect{|p_k(t_d) - c_k|^2}}\sqrt{\expect{|p_k(t) - c_k|^2}} \\
&\leq& \sum_{d=1}^D \sqrt{\sum_{k=1}^K \expect{|p_k(t_d) - c_k|^2}} \sqrt{\sum_{k=1}^K \expect{|p_k(t) - c_k|^2}}
\end{eqnarray*}
where the final inequality follows because the inner product of two vectors 
is less than or equal to the product of norms.  The right hand side is less than or equal to: 
\[ \sum_{d=1}^D \sqrt{2B}\sqrt{2B}  = 2BD\]

\bibliographystyle{unsrt}
\bibliography{../../latex-mit/bibliography/refs}
\end{document}